\newtheorem{theorem}{Theorem}[section]
\newtheorem{lemma}[theorem]{Lemma}
\theoremstyle{definition}
\newtheorem{defn}[theorem]{Definition}
\newtheorem{solution}[theorem]{Solution}
\titleformat{\section}      {\normalfont\large\bfseries}     {\thesection}      {0.8em}{}
\titleformat{\subsection}   {\normalfont\normalsize\bfseries}{\thesubsection}   {0.7em}{}
\titleformat{\subsubsection}{\normalfont\normalsize\itshape} {\thesubsubsection}{0.7em}{}
\titlespacing*{\section}      {0pt}{2.88ex plus 1ex minus .2ex}{1.59ex plus .2ex}
\titlespacing*{\subsection}   {0pt}{2.70ex plus 1ex minus .2ex}{1.15ex plus .2ex}
\titlespacing*{\subsubsection}{0pt}{2.70ex plus 1ex minus .2ex}{1.15ex plus .2ex}
\title{\vspace{-0.35em}A logical treatment of noise:\\ Solving
  The Hardest
  Logic Puzzle Ever
  and its generalizations}
\author{\hspace{0.25em}Daniel 
Vallstrom\textsuperscript{\href{mailto:daniel.vallstrom@gmail.com}
{\tiny\Letter}}\addtocounter{footnote}{1}\thanks{daniel.vallstrom@gmail.com}}
\date{June 2026}
\begin{document}

\maketitle

\abstract{Raymond Smullyan came up with a puzzle that
George Boolos called The Hardest Logic Puzzle Ever.\cite{bool96}
The puzzle has truthful, lying, and random gods who answer yes or no questions 
with words that we don't know the meaning of.
The challenge is to figure out which type each god is.
The puzzle has attracted some general attention --- 
for example, 
one popular presentation of the puzzle has been viewed 10 million times.\cite{gend17}
Various ``top-down'' solutions to the puzzle have been developed.\cite{bool96,rob01}
We present a systematic bottom-up approach
to the puzzle and its generalization.
We prove that an $n$ gods puzzle is solvable if and only if 
the random gods are less than the non-random gods,
for arbitrary cardinals.
We develop a solution using $4.15$ questions 
on average
to the $5$ gods variant with $2$ random and $3$ lying gods.
We introduce an algorithm and an 
implementation
for finding solutions to the generalized problem,
together with upper bounds.
Finally, we note that 
random gods 
act like
noisy sources, 
which provides
a connection
to fault-tolerant computing.}
%

\section{The Hardest Logic Puzzle Ever}
\begin{defn}[The Hardest Logic Puzzle Ever]
\label{def:3GodsPuzzle}
Three gods ($\gamma_1,\ldots,\gamma_3$) will answer three yes or no questions. 
Each question is to be directed at one god at a time.
The gods answer with the word `$\chi$' (or `\_') but we don't know what `$\chi$' (or `\_') means.
One god ($\mathcal{T}$) always tells the truth, one ($\mathcal{F}$) always lies, 
and one ($\mathcal{R}$) answers 
randomly\footnote{The
\label{fn:R}
puzzle has been interpreted as to allow for the random god to not answer randomly but instead
randomly function as a god who either tells the truth, or lies.\cite{rab08}
Since this renders the random god pointless,
we'll stick to the interpretation that the random god answers truly randomly.

Besides, the truly random interpretation seems to be 
what 
Boolos 
had in
mind,\cite{bool96}
e.g.\ with explanations like
``will answer your question yes or no, completely at random''\cite[\hspace{-0.25em}p.\,2]{bool96}.
It is also how \cite{rob01} interpreted the puzzle.}\nolinebreak[3]\hspace{-0.17em}.
The challenge is to figure out which god is which.\cite{bool96}
\end{defn}

\section{Groundwork}
\label{sec:sol}
We'll use $0$, $\bot$, no, and false interchangeably, when there is no risk for confusion.
And similarly for $1$, $\top$, yes, and true.
And $\chi$, whatever it means.
We'll also e.g.\ use '$=$' as a boolean function.

With $3$ gods there are $3\cdot 2$ possibilities for the gods. 
The possibilities doubles if we are to figure out the meaning of $\chi$. 
With $3$ questions we can discern $2^3$ outcomes. 
Hence, we better remain 
ignorant
of the meaning of $\chi$
if we are allowed only $3$ questions.

\subsection{A question template}
Let $\gamma(q)$ be 
god $\gamma$'s answer to the question $q$.

Given a yes or no question $q$,
and a god $\gamma$,
we want a function $t(q,\gamma,\chi) \rightarrow\left\lbrace 0,1\right\rbrace$ that gives 
the truth value of $q$
when $\gamma$ isn't the random god:
%
\begin{defn}
\label{def:t}
\(
t(q,\gamma)\coloneqq\;\;
 \gamma( \mbox{``}\gamma(q)=\chi\mbox{''}) = \chi 
\)
\end{defn}
\begin{theorem}
\label{theorem:templateWorks}
If $\gamma\neq\mathcal{R}$, then $t(q,\gamma)\leftrightarrow q$ 
\end{theorem}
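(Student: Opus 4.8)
The plan is to collapse the two sources of ignorance here — the meaning of `$\chi$' and whether $\gamma$ is $\mathcal{T}$ or $\mathcal{F}$ — into a single unknown bit, and then observe that Definition~\ref{def:t} applies a \emph{known} involution to the truth value of $q$ an even number of times, so the unknown bit cancels.

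First I would record how a non-random god answers an arbitrary yes-or-no question. Let $v\in\{0,1\}$ be the (unknown) truth value named by `$\chi$', and set $\ell_\gamma\coloneqq 0$ if $\gamma=\mathcal{T}$ and $\ell_\gamma\coloneqq 1$ if $\gamma=\mathcal{F}$. The key fact is that for \emph{every} yes-or-no question $r$,
\[
  \bigl(\gamma(r)=\chi\bigr)\;\leftrightarrow\;\bigl(r = v\oplus\ell_\gamma\bigr),
\]
i.e.\ ``$\gamma$ utters `$\chi$'\,'' depends on $r$ only through whether its truth value equals the fixed bit $c_\gamma\coloneqq v\oplus\ell_\gamma$, where `$=$' is boolean equality. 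This is immediate by splitting into the four cases $\gamma\in\{\mathcal{T},\mathcal{F}\}$ and $v\in\{0,1\}$: $\mathcal{T}$ says `$\chi$' exactly when $r$'s truth value is $v$, and $\mathcal{F}$ says `$\chi$' exactly when it is $\lnot v$.

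Then I would apply this fact twice. Let $P$ be the inner question ``$\gamma(q)=\chi$''; since $\gamma(q)$ and `$\chi$' are each a definite word, $P$ is a genuine yes-or-no question, and by the fact (with $r=q$) its truth value is $\bigl(q = c_\gamma\bigr)$. Now $t(q,\gamma)$ is by Definition~\ref{def:t} the truth value of ``$\gamma(P)=\chi$'', so the fact (now with $r=P$) gives
\[
  t(q,\gamma)\;\leftrightarrow\;\bigl(P = c_\gamma\bigr)\;\leftrightarrow\;\bigl((q = c_\gamma) = c_\gamma\bigr).
\]
Since XNOR-ing with a fixed bit is an involution — $(x = c) = c$ equals $x$ for $c\in\{0,1\}$ — the right-hand side is just $q$, which is the claim.

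The only point that needs a little care is the second use of the fact, with $r=P$: one must be content that a non-random god answers the self-referential question ``$\gamma(q)=\chi$'' by the same truth-and-meaning conventions as any other question — it does, because the truth value of $P$ is well defined regardless of which word `$\chi$' actually is — so that re-applying the fact is legitimate. Everything after that is the one-line involution cancellation. One could instead verify the theorem by a brute-force $2\times2\times2$ truth table over the type of $\gamma$, the meaning of `$\chi$', and the truth value of $q$; the $\oplus$-bookkeeping above just makes transparent why it works and why exactly two nestings are needed.
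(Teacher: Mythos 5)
Your proof is correct, but it is organized differently from the paper's. The paper proves Theorem~\ref{theorem:templateWorks} by brute force: it enumerates all eight cases over $q\in\{0,1\}$, $\gamma\in\{\mathcal{T},\mathcal{F}\}$, $\chi\in\{0,1\}$ and checks the composed statement $\gamma(\mbox{``}\gamma(q)=\chi\mbox{''})=\chi$ directly in each one. You instead factor the verification through an intermediate lemma --- that for a non-random god there is a single fixed bit $c_\gamma=v\oplus\ell_\gamma$ with $(\gamma(r)=\chi)\leftrightarrow(r=c_\gamma)$ for every question $r$ --- which needs only a four-case check, and then obtain the theorem by composing this lemma with itself and invoking the involution $((x=c)=c)=x$. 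The two arguments cover the same ground, but yours buys an explanation of \emph{why} exactly two nestings of the meta-question cancel both unknowns simultaneously (each nesting XNORs with the same unknown bit, and doing so twice is the identity), and it would extend mechanically to variants of the template or to deeper nestings; the paper's enumeration is more elementary and self-contained but opaque about the mechanism. Your parenthetical care about the self-referential inner question $P$ being a well-defined yes-or-no question is the right thing to flag and is handled adequately; I see no gap.
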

\vspace{-0.95em}
\begin{proof}
We'll go through all possible cases:

\begin{description}[itemsep=-3pt,topsep=1pt]
\item[\(\bm{q\!=\!1, \gamma\!=\!\mathcal{T}, \chi\!=\!1}\):] \hspace{-0.0em}Then 
$\gamma(q)\!=\!\chi$, and
\( \gamma( \mbox{``}\gamma(q)\!=\!\chi\mbox{''}) \!=\! \chi \).
\item[\(\bm{q\!=\!1, \gamma\!=\!\mathcal{T}, \chi\!=\!0}\):] \hspace{-0.0em}Then 
$\gamma(q)\!\neq\!\chi$, and
\( \gamma( \mbox{``}\gamma(q)\!=\!\chi\mbox{''}) \!=\! \chi \).
\item[\(\bm{q\!=\!1, \gamma\!=\!\mathcal{F}, \chi\!=\!1}\):] \hspace{-0.0em}Then 
$\gamma(q)\!\neq\!\chi$, and
\( \gamma( \mbox{``}\gamma(q)\!=\!\chi\mbox{''}) \!=\! \chi \).
\item[\(\bm{q\!=\!1, \gamma\!=\!\mathcal{F}, \chi\!=\!0}\):] \hspace{-0.0em}Then 
$\gamma(q)\!=\!\chi$, and
\( \gamma( \mbox{``}\gamma(q)\!=\!\chi\mbox{''}) \!=\! \chi \).
\item[\(\bm{q\!=\!0, \gamma\!=\!\mathcal{T}, \chi\!=\!1}\):] \hspace{-0.0em}Then 
$\gamma(q)\!\neq\!\chi$, and
\( \gamma( \mbox{``}\gamma(q)\!=\!\chi\mbox{''}) \!\neq\! \chi \).
\item[\(\bm{q\!=\!0, \gamma\!=\!\mathcal{T}, \chi\!=\!0}\):] \hspace{-0.0em}Then 
$\gamma(q)\!=\!\chi$, and
\( \gamma( \mbox{``}\gamma(q)\!=\!\chi\mbox{''}) \!\neq\! \chi \).
\item[\(\bm{q\!=\!0, \gamma\!=\!\mathcal{F}, \chi\!=\!1}\):] \hspace{-0.0em}Then 
$\gamma(q)\!=\!\chi$, and
\( \gamma( \mbox{``}\gamma(q)\!=\!\chi\mbox{''}) \!\neq\! \chi \).
\item[\(\bm{q\!=\!0, \gamma\!=\!\mathcal{F}, \chi\!=\!0}\):] \hspace{-0.0em}Then 
$\gamma(q)\!\neq\!\chi$, and
\( \gamma( \mbox{``}\gamma(q)\!=\!\chi\mbox{''}) \!\neq\! \chi \).
\end{description}
(Cases also hold for symmetry reasons, and for double negation reasons.)
\end{proof}

For convenience we'll also introduce a way to refer to the meta-question put to the god 
in definition \ref{def:t}:
\begin{defn}
\label{def:tq}
\(
t_q(q,\gamma)\coloneqq\;\;
 \mbox{``}\gamma(q)=\chi\mbox{''} 
\)
\end{defn}

Boolos does not use something like definition \ref{def:tq} in his solution to the 
puzzle,\cite{bool96}
but Tim Roberts does.\cite{rob01}

If $\gamma\neq\mathcal{R}$, then since $t(q,\gamma)\leftrightarrow q$, 
any number of nested $t_q$ applications would work too
(cf.\ section \ref{sec:fixNote}):
\[
 \gamma( \mbox{``}\gamma(\mbox{``}\ldots \gamma( \mbox{``}\gamma(q)=\chi\mbox{''}) =
  \chi\ldots\mbox{''}) = \chi\mbox{''}) = \chi 
 \;\leftrightarrow\;  q
\]

\subsection{How to find questions}
Instead of presenting ``top-down'' solutions,
we'll try to develop solutions from the ground up,
that are guaranteed to work, and are optimal.

Essential for efficient searches is to split the search space in equally large subparts.
This means that we want the possible answers to a question to be equally strong.

\subsubsection{Finding a solution to the non-random interpretation}
Let's suppose, 
as preparation,
that $\mathcal{R}$ functions like either 
$\mathcal{T}$ or $\mathcal{F}$ (cf.\ fn.\,\ref{fn:R});
in particular, theorem \ref{theorem:templateWorks} works also for $\mathcal{R}$.


Then
an optimal split of the $6$ possibilities would be provided 
by asking about:
\begin{defn}[$q_{\bar{\mathcal{R}}}$]
\label{def:qRBar}
\begin{equation}
\label{eq:qRBar}
q_{\bar{\mathcal{R}}}\coloneqq\;\;
\bigvee
 \left\lbrace
  \begin{matrix}
   \land\{\gamma_1=\mathcal{R},\gamma_2=\mathcal{T},\gamma_3=\mathcal{F}\},\\
   \land\{\gamma_1=\mathcal{R},\gamma_2=\mathcal{F},\gamma_3=\mathcal{T}\},\\
   \land\{\gamma_1=\mathcal{T},\gamma_2=\mathcal{F},\gamma_3=\mathcal{R}\}\;\\
  \end{matrix}
 \right\rbrace 
\end{equation}
\end{defn}
We'll also need to reason about $\neg q_{\bar{\mathcal{R}}}$.
\begin{equation}
\neg q_{\bar{\mathcal{R}}} \leftrightarrow
\bigwedge
 \left\lbrace
  \begin{matrix}
   \lor\{\gamma_1\neq\mathcal{R},\gamma_2\neq\mathcal{T},\gamma_3\neq\mathcal{F}\},\\
   \lor\{\gamma_1\neq\mathcal{R},\gamma_2\neq\mathcal{F},\gamma_3\neq\mathcal{T}\},\\
   \lor\{\gamma_1\neq\mathcal{T},\gamma_2\neq\mathcal{F},\gamma_3\neq\mathcal{R}\}\;\\
  \end{matrix}
 \right\rbrace 
\end{equation}
which in disjunctive normal form (DNF) becomes
\begin{equation}
\label{eq:notqRBar}
\neg q_{\bar{\mathcal{R}}} \leftrightarrow
\bigvee
 \left\lbrace
  \begin{matrix}
   \land\{\gamma_1=\mathcal{T},\gamma_2=\mathcal{R},\gamma_3=\mathcal{F}\},\\
   \land\{\gamma_1=\mathcal{F},\gamma_2=\mathcal{T},\gamma_3=\mathcal{R}\},\\
   \land\{\gamma_1=\mathcal{F},\gamma_2=\mathcal{R},\gamma_3=\mathcal{T}\}\;\\
  \end{matrix}
 \right\rbrace 
\end{equation}
\begin{solution}
\label{solution:nonR}
There is a solution to the non-random interpretation of The Hardest Logic Puzzle Ever
using $2.67$ questions.
(It's also enough to assume non-randomness for only the first question.)
\end{solution}
\begin{proof}
Put $t_q(q_{\bar{\mathcal{R}}},\gamma_1)$ to $\gamma_1$ and consider the possible cases:

\begin{description}[itemsep=2pt,topsep=1pt,leftmargin=1.1em]
\item[Case \(t(q_{\bar{\mathcal{R}}},\gamma_1)\):] \hspace{0.0em}Then we know
 from theorem \ref{theorem:templateWorks}, and from the supposed non-randomness of $\mathcal{R}$,
 that $q_{\bar{\mathcal{R}}}$ holds.
 Next we'll ask $\gamma_2$ about $\gamma_2=\mathcal{T}$.
 
 \textbf{Suppose $t(\gamma_2=\mathcal{T},\gamma_2)$.}
 
 Then we know from equation \eqref{eq:qRBar} that
 $\gamma_1=\mathcal{R}$, $\gamma_2=\mathcal{T}$, and $\gamma_3=\mathcal{F}$.
 
 This case used $2$ questions.
 
 \textbf{Suppose $\neg t(\gamma_2=\mathcal{T},\gamma_2)$.}
 
 Then we know from equation \eqref{eq:qRBar} that $\gamma_2=F$.
 
 $t(\gamma_1=\mathcal{T},\gamma_2)$ determines which is which of $\gamma_1$ and $\gamma_3$.

\item[Case \(\neg t(q_{\bar{\mathcal{R}}},\gamma_1)\):] \hspace{0.0em}Then 
 $\neg q_{\bar{\mathcal{R}}}$.
 Next we'll ask $\gamma_1$ about $\gamma_1=\mathcal{T}$.
 
 \textbf{Suppose $t(\gamma_1=\mathcal{T},\gamma_1)$.}
 
 Then we know from equation \eqref{eq:notqRBar} that
 $\gamma_1=\mathcal{T}$, $\gamma_2=\mathcal{R}$, and $\gamma_3=\mathcal{F}$.
 
 This case used $2$ questions.
 
 \textbf{Suppose $\neg t(\gamma_1=\mathcal{T},\gamma_1)$.}
 
 Then we know from equation \eqref{eq:notqRBar} that $\gamma_1=F$.
 
 $t(\gamma_2=\mathcal{T},\gamma_1)$ determines which is which of $\gamma_2$ and $\gamma_3$.
\end{description}
\vspace{-0.65em}
\end{proof}
\vspace{-0.90em}

\subsubsection{Managing randomness}

For the full
version of the puzzle (def.\,\ref{def:3GodsPuzzle}),
each question that can be answered by the random god weakens the 
narrowing of the search space by up to $2$ possibilities,
adding $(\gamma_1\!=\!\mathcal{R},\gamma_2\!=\!\mathcal{F},\gamma_3\!=\!\mathcal{T})$
and $(\gamma_1\!=\!\mathcal{R},\gamma_2\!=\!\mathcal{T},\gamma_3\!=\!\mathcal{F})$ 
to the list of possibilities, say.
So it also seems important to find, as quickly as possible,
a god that isn't $\mathcal{R}$, in order to get reliable answers and minimize waste.


Hence, the problem with asking about something like $q_{\bar{\mathcal{R}}}$ for the full
puzzle (def.\,\ref{def:3GodsPuzzle})
is that the conclusion we are able to draw from $\neg t(q_{\bar{\mathcal{R}}},\gamma_1)$
is given by
\[
\bigvee
 \left\lbrace
  \begin{matrix}
   \land\{\gamma_1=\mathcal{T},\gamma_2=\mathcal{R},\gamma_3=\mathcal{F}\},\\
   \land\{\gamma_1=\mathcal{F},\gamma_2=\mathcal{T},\gamma_3=\mathcal{R}\},\\
   \land\{\gamma_1=\mathcal{F},\gamma_2=\mathcal{R},\gamma_3=\mathcal{T}\},\\
   \land\{\gamma_1=\mathcal{R},\gamma_2=\mathcal{F},\gamma_3=\mathcal{T}\},\\
   \land\{\gamma_1=\mathcal{R},\gamma_2=\mathcal{T},\gamma_3=\mathcal{F}\}\;\\
  \end{matrix}
 \right\rbrace 
\]
Since this has $5$ possibilities, it's not solvable with only the remaining $2$ questions.
(The case $t(q_{\bar{\mathcal{R}}},\gamma_1)$ remains as in solution \ref{solution:nonR} though
as that case already includes the possibilities
$(\mathcal{R},\mathcal{F},\mathcal{T})$ and $(\mathcal{R},\mathcal{T},\mathcal{F})$.)

Instead we'll again balance the partitions, by moving $1$ of the added random possibilities from
the negative side to the positive:
\begin{defn}[$q_1$]
\label{def:q1}
\begin{equation}
\label{eq:q1}
q_1\coloneqq\;\;
\bigvee
 \left\lbrace
  \begin{matrix}
   \land\{\gamma_1=\mathcal{R},\gamma_2=\mathcal{T},\gamma_3=\mathcal{F}\},\\
   \land\{\gamma_1=\mathcal{R},\gamma_2=\mathcal{F},\gamma_3=\mathcal{T}\},\\
   \land\{\gamma_1=\mathcal{T},\gamma_2=\mathcal{F},\gamma_3=\mathcal{R}\},\\
   \land\{\gamma_1=\mathcal{F},\gamma_2=\mathcal{T},\gamma_3=\mathcal{R}\}\;\\
  \end{matrix}
 \right\rbrace 
\end{equation}
\end{defn}
We'll also need to reason about $\neg q_1$.
\begin{equation}
\neg q_1 \leftrightarrow
\bigwedge
 \left\lbrace
  \begin{matrix}
   \lor\{\gamma_1\neq\mathcal{R},\gamma_2\neq\mathcal{T},\gamma_3\neq\mathcal{F}\},\\
   \lor\{\gamma_1\neq\mathcal{R},\gamma_2\neq\mathcal{F},\gamma_3\neq\mathcal{T}\},\\
   \lor\{\gamma_1\neq\mathcal{T},\gamma_2\neq\mathcal{F},\gamma_3\neq\mathcal{R}\},\\
   \lor\{\gamma_1\neq\mathcal{F},\gamma_2\neq\mathcal{T},\gamma_3\neq\mathcal{R}\}\;\\
  \end{matrix}
 \right\rbrace 
\end{equation}
which in disjunctive normal form becomes
\begin{equation}
\label{eq:notq1}
\neg q_1 \leftrightarrow
\bigvee
 \left\lbrace
  \begin{matrix}
   \land\{\gamma_1=\mathcal{T},\gamma_2=\mathcal{R},\gamma_3=\mathcal{F}\},\\
   \land\{\gamma_1=\mathcal{F},\gamma_2=\mathcal{R},\gamma_3=\mathcal{T}\}\;\\
  \end{matrix}
 \right\rbrace 
\end{equation}
which with added $\mathcal{R}$ possibilities gives
\begin{defn}[$\bar{q}_1^R$]
\label{def:notq1R}
\begin{equation}
\label{eq:notq1R}
\bar{q}_1^R \coloneqq\;\;
\bigvee
 \left\lbrace
  \begin{matrix}
   \land\{\gamma_1=\mathcal{T},\gamma_2=\mathcal{R},\gamma_3=\mathcal{F}\},\\
   \land\{\gamma_1=\mathcal{F},\gamma_2=\mathcal{R},\gamma_3=\mathcal{T}\},\\
   \land\{\gamma_1=\mathcal{R},\gamma_2=\mathcal{F},\gamma_3=\mathcal{T}\},\\
   \land\{\gamma_1=\mathcal{R},\gamma_2=\mathcal{T},\gamma_3=\mathcal{F}\}\;\\
  \end{matrix}
 \right\rbrace 
\end{equation}
\end{defn}

\section{A bottom-up solution to the puzzle}
\begin{solution}
\label{solution:bottomUp}
A solution to The Hardest Logic Puzzle Ever exists.
\end{solution}
\begin{proof}
Put $t_q(q_1,\gamma_1)$ to $\gamma_1$ and consider the possible cases:
\begin{description}[itemsep=2pt,topsep=3pt,leftmargin=1.1em]
\item[Case \(t(q_1,\gamma_1)\):] \hspace{0.2em}Since equation \eqref{eq:q1}
 already includes all the possibilities where $\gamma_1=\mathcal{R}$,
 $q_1$ holds.
 Hence $\gamma_2\neq\mathcal{R}$ and is safe to question.

 Next we'll ask $\gamma_2$ about $\gamma_2=\mathcal{T}$.
 
 After that, $t(\gamma_1=\mathcal{R},\gamma_2)$ determines which is which of $\gamma_1$ and
 $\gamma_3$.

\item[Case \(\neg t(q_1,\gamma_1)\):] \hspace{0.2em}Then $\gamma_1=\mathcal{R}$ or
 $\neg q_1$ holds. Adding the $\gamma_1=\mathcal{R}$ possibilities to $\neg q_1$ results in
 $\bar{q}_1^R$, which must hold. Inspecting equation \eqref{eq:notq1R} shows that
 $\gamma_3\neq\mathcal{R}$ and is safe to question.

 Next we'll ask $\gamma_3$ about $\gamma_3=\mathcal{T}$.
 
 After that, $t(\gamma_1=\mathcal{R},\gamma_3)$ determines which is which of $\gamma_1$ and
 $\gamma_2$.
\end{description}
\vspace{-1.5em}
\end{proof}

Note that $q_1\leftrightarrow (\gamma_2\!\neq\!\mathcal{R})$. A first question $q$ such that 
$q$ and $\neg q$
are equally strong, and where equally many $\mathcal{R}$ possibilities are added to each side
of the search split, works too --- for example 
$\gamma_3\!=\!\mathcal{R} \lor (\gamma_1\!=\!\mathcal{R} \land \gamma_2\!=\!\mathcal{T} \land
 \gamma_3\!=\!\mathcal{F})$.

Note too that there is no solution using less than $3$ questions to the full puzzle.


\section{The n gods puzzle class}
\label{sec:nGods}
\begin{defn}[The Hardest Logic Puzzle Ever with $n$ Gods, $m$ Random Gods, and
$k$ Truthful Gods]
\label{def:nGodsPuzzle}
Let the $(n,m,k)$ gods puzzle be like The Hardest Puzzle Ever (def.\,\ref{def:3GodsPuzzle})
but with $n$ gods, $m$ random gods, $k$ truthful gods, 
$n-m-k$ lying gods,
and with no restriction on the number of questions allowed.
\end{defn}

\begin{theorem}
\label{theorem:nGodsSolvability}
An $n$ gods puzzle is solvable if and only if the number of random gods is strictly less than
the number of non-random gods.
\end{theorem}

\begin{lemma}
\label{lemma:findingNonR}
If an $n$ gods puzzle has strictly more non-random gods than random gods, 
then a non-random god can be found.  
\end{lemma}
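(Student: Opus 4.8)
The plan is to let every god accuse every other god of being random, via the template of Definition~\ref{def:t}, and then to read off a guaranteed non-random god from the accusation counts. Write $n$ for the number of gods and $m$ for the number of random ones, so that by hypothesis $m < n - m$. For each ordered pair of distinct gods $\gamma_i,\gamma_j$ I would pose $t_q(\gamma_j = \mathcal{R},\gamma_i)$ to $\gamma_i$ and record the bit $c_{ij} \coloneqq t(\gamma_j = \mathcal{R},\gamma_i)$; say ``$\gamma_i$ accuses $\gamma_j$'' when $c_{ij} = 1$, and let the in-degree of $\gamma_j$ be the number of gods accusing it. This spends $n(n-1)$ questions, which is allowed since the $n$ gods puzzle imposes no bound on the number of questions.

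By Theorem~\ref{theorem:templateWorks}, if $\gamma_i \neq \mathcal{R}$ then $c_{ij} = 1$ exactly when $\gamma_j = \mathcal{R}$ --- a non-random god accuses precisely the random gods and no one else. Two bounds follow at once. If $\gamma_j$ is random, then every one of the $n - m$ non-random gods accuses it, so its in-degree is at least $n - m$. If $\gamma_j$ is non-random, then no non-random god accuses it, so its accusers are confined to the $m$ random gods and its in-degree is at most $m$. Because $m < n - m$, these two ranges are disjoint, so in-degree cleanly separates the two kinds of god. The procedure therefore finishes by outputting any god of minimum in-degree (equivalently, any god with in-degree $\le m$): at least the $n - m \ge 1$ non-random gods qualify, and by the separation just noted no random god does, so the god returned is non-random.

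The step that needs actual care is the in-degree bound for a non-random god, which rests on the ``$c_{ij} = 0$ case'' of Theorem~\ref{theorem:templateWorks}: a non-random god, asked about another non-random god, truthfully answers ``not random'', so non-random gods never accuse one another --- while a random god's accusations are entirely unconstrained, so a non-random god's in-degree can be anything from $0$ up to $m$ but never more. The remaining points are routine: ``$\gamma_j = \mathcal{R}$'' is a legitimate yes or no question, and Theorem~\ref{theorem:templateWorks} speaks about one god at a time, hence applies here unchanged. I would also note that the strict inequality $m < n - m$ enters only in the final disjointness argument, which foreshadows that this inequality is exactly the threshold governing Theorem~\ref{theorem:nGodsSolvability}.
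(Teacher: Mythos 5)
Your argument is correct, but it takes a genuinely different route from the paper. The paper proceeds by induction on the number of gods: it has $\gamma_1$ scan the others for a random god, removes the pair $(\gamma_1,\gamma_i)$ (or just $\gamma_1$ if no accusation occurs), observes that the surplus of non-random over random gods cannot decrease, and recurses on the smaller puzzle. You instead run a full round-robin of accusations and extract the answer from in-degrees: since a non-random accuser marks exactly the random gods, every random god collects all $n-m$ non-random accusations while a non-random god collects at most the $m$ random ones, and $m<n-m$ makes these ranges disjoint, so any god of minimum in-degree is non-random. Both arguments are sound; the key bounds you isolate (at least $n-m$ accusers for a random god, at most $m$ for a non-random one, regardless of how the random gods happen to answer) are exactly what is needed, and the strict inequality enters only to separate the two ranges, just as you say. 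Your version is non-recursive and self-contained, at the cost of $n(n-1)$ questions, which the puzzle permits; the paper's inductive formulation is what gets reused almost verbatim for the transfinite case in section~\ref{sec:infGods} (the function $\bar{\varrho}$), where a global vote count over infinitely many gods would not carry over as directly.
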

\vspace{-1.2em}
\begin{proof}
We'll prove the lemma by induction.

The lemma holds for puzzles with $1$ and $2$ gods.

Assume that the lemma holds for $k<n$,
and that there are more non-random gods than random gods. 
We'll then find a non-random god for the $n$ case.

Ask $\gamma_1$ about $\gamma_i\!=\!\mathcal{R}$ for $2\leq i\leq n$ until 
$t(\gamma_i\!=\!\mathcal{R},\gamma_1)$, or 
all gods have been checked.

If $\gamma_1\!\neq\! \mathcal{R}$, then $\gamma_i\!=\!\mathcal{R}$ by 
theorem \ref{theorem:templateWorks}, or there are no
more random gods at all.

Hence, the subproblem for 
$(\gamma_2,\ldots,\gamma_{i-1},\gamma_{i+1},\linebreak[1]\ldots,\linebreak[1]\gamma_n)$
has at least $1$ less random god, and at most $1$ less non-random god, 
if there is any random god at all. 
Thus, a non-random god $\gamma_j$ can be found for the subproblem. 
And $\gamma_j$ 
suffice
for the $n$ case too. 
\end{proof}

\begin{lemma}
\label{lemma:RsLTNonRsSolvable}
If an $n$ gods puzzle has strictly more non-random gods than random gods, 
then it is solvable.  
\end{lemma}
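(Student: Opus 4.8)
The plan is to bootstrap off Lemma \ref{lemma:findingNonR}: once we hold a god $\gamma_j$ that we \emph{know} is not $\mathcal{R}$, Theorem \ref{theorem:templateWorks} turns $\gamma_j$ into a perfectly reliable oracle, since $t(q,\gamma_j)\leftrightarrow q$ for every question $q$. Because Definition \ref{def:nGodsPuzzle} imposes no bound on the number of questions, possessing such an oracle makes the rest of the puzzle essentially free, so the proof should be short.

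First I would invoke Lemma \ref{lemma:findingNonR}, whose hypothesis is exactly the hypothesis of the present lemma, to locate a non-random god $\gamma_j$. Next, for each of the other $n-1$ gods $\gamma_\ell$ in turn, I would ask $\gamma_j$ --- via the meta-question $t_q$ of Definition \ref{def:tq} --- a small fixed number of questions pinning down which of the three types ($\mathcal{R}$, $\mathcal{T}$, or lying) $\gamma_\ell$ is; two questions per god suffice, e.g.\ first ``$\gamma_\ell=\mathcal{R}$'', and if the answer is no then ``$\gamma_\ell=\mathcal{T}$''. By Theorem \ref{theorem:templateWorks} every one of these answers is truthful, so after at most $2(n-1)$ further questions the full type assignment is determined. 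Equivalently, one may simply observe that a reliable oracle together with no question limit trivially solves any puzzle with finitely many hidden states: enumerate the finitely many type assignments consistent with the given $n$, $m$, $k$, and binary-search among them.

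The main obstacle here is not technical --- it has already been absorbed into Lemma \ref{lemma:findingNonR}, where the delicate point is that the hunt for a non-random god must remain correct even while some of the questioning gods may themselves be $\mathcal{R}$ and answering garbage. Once that lemma is granted, the only thing left to check is that the follow-up questioning really does separate the liars from the truth-teller as well as flushing out the random gods, which is immediate from the oracle property of $t(\cdot,\gamma_j)$. I would also note in passing that this argument is deliberately wasteful: it is meant only to establish solvability, which combined with Lemma \ref{lemma:findingNonR} yields the ``if'' direction of Theorem \ref{theorem:nGodsSolvability}, whereas the bottom-up constructions of the earlier sections use far fewer questions.
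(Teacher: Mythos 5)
Your proposal is correct and follows essentially the same route as the paper: invoke Lemma \ref{lemma:findingNonR} to obtain a non-random god $\gamma_j$, then use $\gamma_j$ as a reliable oracle via Theorem \ref{theorem:templateWorks} to interrogate the identities of all remaining gods, exploiting the absence of a question limit. The only difference is that you spell out the per-god question count ($2$ per god), which the paper leaves implicit.
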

\vspace{-1.2em}
\begin{proof}
Assume more non-random gods than random gods. Then, by lemma \ref{lemma:findingNonR},
a non-random god $\gamma_j$ can be found.
After that it's straightforward to go through all gods and ask $\gamma_j$ about their
identity. This determines all the gods, according to theorem \ref{theorem:templateWorks}.
\end{proof}

\begin{proof}[Proof of theorem \ref{theorem:nGodsSolvability}]
The $\Leftarrow$ case is covered by lemma \ref{lemma:RsLTNonRsSolvable}.

For the $\Rightarrow$ case,
to see why puzzles with at least as many random gods 
as non-random gods aren't solvable, consider the easiest to solve of such problems:
Assume that $n$ is even, that the random gods equal the non-random gods,
and that there are only truthful non-random gods.
Let 
\begin{defn}
\label{def:pnUns}
$p_n \coloneqq $
 \begin{equation}
 \label{eq:nonSolvable}
  \bigvee\!\!
   \left\lbrace
    \begin{matrix}
     \land\{\gamma_1\!=\!\mathcal{T},\gamma_2\!=\!\mathcal{R},\gamma_3\!=\!\mathcal{T},
     \gamma_4\!=\!\mathcal{R},\ldots,\gamma_n\!=\!\mathcal{R}\},\\
     \land\{\gamma_1\!=\!\mathcal{R},\gamma_2\!=\!\mathcal{T},\gamma_3\!=\!\mathcal{R},
     \gamma_4\!=\!\mathcal{T},\ldots,\gamma_n\!=\!\mathcal{T}\}\;\\
    \end{matrix}\!
   \right\rbrace\!\!
 \end{equation}
\end{defn}
Let $p_{\mathcal{\scriptscriptstyle T}\mathcal{\scriptscriptstyle R}}$ and
$p_{\mathcal{\scriptscriptstyle R}\mathcal{\scriptscriptstyle T}}$ be the
conjunctions of $p_n$:
\begin{defn}
\label{pTR}
$p_{\mathcal{\scriptscriptstyle T}\mathcal{\scriptscriptstyle R}}\coloneqq $
 \begin{equation}
   \land\{\gamma_1\!=\!\mathcal{T},\gamma_2\!=\!\mathcal{R},\gamma_3\!=\!\mathcal{T},
   \gamma_4\!=\!\mathcal{R},\ldots,\gamma_n\!=\!\mathcal{R}\}
 \end{equation}
\end{defn}
\begin{defn}
\label{pRT}
$p_{\mathcal{\scriptscriptstyle R}\mathcal{\scriptscriptstyle T}}\coloneqq $
 \begin{equation}
  \hspace{3.0em}\land\{\gamma_1\!=\!\mathcal{R},\gamma_2\!=\!\mathcal{T},
  \gamma_3\!=\!\mathcal{R},
  \gamma_4\!=\!\mathcal{T},\ldots,\gamma_n\!=\!\mathcal{T}\}
 \end{equation}
\end{defn}
If the gods are as described by 
$p_{\mathcal{\scriptscriptstyle T}\mathcal{\scriptscriptstyle R}}$ or
$p_{\mathcal{\scriptscriptstyle R}\mathcal{\scriptscriptstyle T}}$, and only
$p_n$ is known, then that puzzle instance is unsolvable, if the random gods are 
unhelpful (we'll show this next).
So let's assume that the gods are as described by $p_n$.

To get to an unsolvable position, we'll have the random gods ``happen''
to force the puzzle there. To that end,
if both $q$ and $\neg q$ are consistent with $p_n$,
(i.e.\ not $p_n\!\rightarrow\!\neg q$ and not $p_n\!\rightarrow\!q$),
assume that the random gods always happen to give the incorrect answer
to a question about $q$.
If only one of $q$ and $\neg q$ is consistent with $p_n$,
assume that the random gods always happen to give the correct answer 
to a question about $q$.


Call a question `trivial' if it or its negation follow from already
asked questions.

Given this setup,
and if non-trivial questions are asked,
then it will be known that $p_n$ holds.
But once there,
no more conclusion can be drawn from the answer to a question $q$.
And it is not possible to determine which of 
$p_{\mathcal{\scriptscriptstyle T}\mathcal{\scriptscriptstyle R}}$ and
$p_{\mathcal{\scriptscriptstyle R}\mathcal{\scriptscriptstyle T}}$
that holds.

More specifically, without loss of generality we will assume that $\gamma_1$ is asked about $q$. 


An easy way to see that we 
are able to
conclude $p_n$ is to note that we can ask
$\frac{n}{2}+1$ gods
about $p_n$ explicitly.
Since all gods answer that $p_n$ holds, $p_n$ can be concluded since at least
$1$ of $\frac{n}{2}+1$ gods must be non-random.


Note that
if $p_n$ 
is known
and $q$ is non-trivial, then
$q\leftrightarrow p_{\mathcal{\scriptscriptstyle T}\mathcal{\scriptscriptstyle R}}$ or
$q\leftrightarrow p_{\mathcal{\scriptscriptstyle R}\mathcal{\scriptscriptstyle T}}$.
Because suppose 
that $q$ is non-trivial. Then it has a model $\mathcal{M}$ (i.e.\ an assignment of the gods)
where $q$, and $p_n$, are true. Suppose, without loss of generality, that it's 
the $p_{\mathcal{\scriptscriptstyle T}\mathcal{\scriptscriptstyle R}}$
disjunct that's true in $\mathcal{M}$.
Then, since $p_{\mathcal{\scriptscriptstyle T}\mathcal{\scriptscriptstyle R}}$
completely determines a model, $q$ holds whenever 
$p_{\mathcal{\scriptscriptstyle T}\mathcal{\scriptscriptstyle R}}$ does, i.e.\
$p_{\mathcal{\scriptscriptstyle T}\mathcal{\scriptscriptstyle R}}\rightarrow q$.
Suppose $\neg p_{\mathcal{\scriptscriptstyle T}\mathcal{\scriptscriptstyle R}}$.
Then $p_{\mathcal{\scriptscriptstyle R}\mathcal{\scriptscriptstyle T}}$.
Since $p_{\mathcal{\scriptscriptstyle R}\mathcal{\scriptscriptstyle T}}$
determines its models completely, if $q$ where to hold, then $q$ would
hold in all models to $p_n$, contradicting that $q$ is non-trivial.
Hence $\neg q$ holds, i.e.\ 
$\neg p_{\mathcal{\scriptscriptstyle T}\mathcal{\scriptscriptstyle R}}\rightarrow \neg q$.


Assume that $p_n$ is known.
To see that once $p_n$ is known, no more conclusions can be made, 
suppose $\gamma_1\!=\!\mathcal{T}$, 
and $q$ is non-trivial.
Then an answer that $q$ (or $\neg q$) holds 
(with implications that 
$p_{\mathcal{\scriptscriptstyle T}\mathcal{\scriptscriptstyle R}}$ holds)
is undone because we can only conclude that
$q \vee \gamma_1\!=\!\mathcal{R}$ (or $\neg q \vee \gamma_1\!=\!\mathcal{R}$),
which is equivalent to $p_n$.

Similarly, if instead $\gamma_1=\mathcal{R}$, then we are able to conclude only
$q \vee \gamma_1\!=\!\mathcal{R}$ (or $\neg q \vee \gamma_1\!=\!\mathcal{R}$).
If $q$ is non-trivial, 
then both $q$ and $\neg q$ are consistent with $p_n$.
Hence, the false answer that $q$ (or $\neg q$) holds
undoes the disjunct $\gamma_1=\mathcal{R}$. 

If $q$ is trivial, the concluded disjunction, 
$q \vee \gamma_1\!=\!\mathcal{R}$ (or $\neg q \vee \gamma_1\!=\!\mathcal{R}$),
will already be known, and nothing new can be concluded again.


To see that even harder puzzles to solve are unsolvable too,
the restriction that the non-random gods are only truthful is immaterial.

If more random gods are added, the proof still works with minor alterations, e.g.\
to eq.\,\eqref{eq:nonSolvable}. 
\end{proof}

\section{A solution to the 5 gods puzzle with 2 random and 3 truthful gods}
\label{sec:t3r2}

We will solve the puzzle where the non-random gods are the same,
i.e.\ all $\mathcal{T}$, or all $\mathcal{F}$. 
This is unimportant
although it reduces the number of possibilities; 
the other variants are similar.

There are $\frac{5\cdot 4}{2}$ possibilities for the gods.

We'll address $\gamma_1$ first, without loss of generality.

Included in any conclusions drawn from the first answer is that $\gamma_1$ could be $\mathcal{R}$:
$\gamma_1\!=\!\mathcal{R} \;\leftrightarrow$
\begin{equation}
\label{eq:gamma1IsR}
\bigvee\!\!
 \left\lbrace\!
  \begin{matrix}
   \land\{\gamma_1=\mathcal{R},\gamma_2=\mathcal{T},\gamma_3=\mathcal{T},\gamma_4=\mathcal{T},
   \gamma_5=\mathcal{R}\},\\
   \land\{\gamma_1=\mathcal{R},\gamma_2=\mathcal{T},\gamma_3=\mathcal{T},\gamma_4=\mathcal{R},
   \gamma_5=\mathcal{T}\},\vspace{0pt}\\
   \land\{\gamma_1=\mathcal{R},\gamma_2=\mathcal{T},\gamma_3=\mathcal{R},\gamma_4=\mathcal{T},
   \gamma_5=\mathcal{T}\},\\
   \land\{\gamma_1=\mathcal{R},\gamma_2=\mathcal{R},\gamma_3=\mathcal{T},\gamma_4=\mathcal{T},
   \gamma_5=\mathcal{T}\}\;\\
  \end{matrix}\!
 \right\rbrace\!\!\!\!
\end{equation}
For the first question we'll take half of the conjunctions from
$\gamma_1=\mathcal{R}$ (eq.\,\eqref{eq:gamma1IsR}), 
and add half of the remaining possibilities,
aiming to get $\gamma_2$ likely to be non-random in the positive case,
and $\gamma_3$ likely non-random in the negative case:
\begin{defn}
\label{def:q15}
$q_1^5\coloneqq$
\begin{equation}
\label{eq:q15}
\bigvee\!
 \left\lbrace\!
  \begin{matrix}
   \land\{\gamma_1=\mathcal{R},\gamma_2=\mathcal{T},\gamma_3=\mathcal{T},\gamma_4=\mathcal{T},
   \gamma_5=\mathcal{R}\},\\
   \land\{\gamma_1=\mathcal{R},\gamma_2=\mathcal{T},\gamma_3=\mathcal{T},\gamma_4=\mathcal{R},
   \gamma_5=\mathcal{T}\},\\
   \land\{\gamma_1=\mathcal{T},\gamma_2=\mathcal{T},\gamma_3=\mathcal{T},\gamma_4=\mathcal{R},
   \gamma_5=\mathcal{R}\},\vspace{0pt}\\
   \land\{\gamma_1=\mathcal{T},\gamma_2=\mathcal{T},\gamma_3=\mathcal{R},\gamma_4=\mathcal{T},
   \gamma_5=\mathcal{R}\},\\
   \land\{\gamma_1=\mathcal{T},\gamma_2=\mathcal{T},\gamma_3=\mathcal{R},\gamma_4=\mathcal{R},
   \gamma_5=\mathcal{T}\}\;\\
  \end{matrix}\!
 \right\rbrace\!\!\!
\end{equation}
\end{defn}
We'll also need to reason about $\neg q_1^5$.
Using disjunctive normal form we have:
$\neg q_1^5 \leftrightarrow$
\begin{equation}
\label{eq:notq15}
\bigvee\!\!
 \left\lbrace\!
  \begin{matrix}
   \land\{\gamma_1=\mathcal{T},\gamma_2=\mathcal{R},\gamma_3=\mathcal{T},\gamma_4=\mathcal{T},
   \gamma_5=\mathcal{R}\},\\
   \land\{\gamma_1=\mathcal{T},\gamma_2=\mathcal{R},\gamma_3=\mathcal{T},\gamma_4=\mathcal{R},
   \gamma_5=\mathcal{T}\},\vspace{0pt}\\
   \land\{\gamma_1=\mathcal{T},\gamma_2=\mathcal{R},\gamma_3=\mathcal{R},\gamma_4=\mathcal{T},
   \gamma_5=\mathcal{T}\},\\
   \land\{\gamma_1=\mathcal{R},\gamma_2=\mathcal{T},\gamma_3=\mathcal{R},\gamma_4=\mathcal{T},
   \gamma_5=\mathcal{T}\},\\
   \land\{\gamma_1=\mathcal{R},\gamma_2=\mathcal{R},\gamma_3=\mathcal{T},\gamma_4=\mathcal{T},
   \gamma_5=\mathcal{T}\}\;\\
  \end{matrix}\!
 \right\rbrace\!\!\!
\end{equation}
Adding $\gamma_1=\mathcal{R}$ possibilities (eq.\,\eqref{eq:gamma1IsR}) to $q_1^5$ and 
$\neg q_1^5$ gives
\begin{defn}
\label{def:q15R}
$q_{15}^R \coloneqq$
\begin{equation}
\label{eq:q15R}
\bigvee\!\!
 \left\lbrace\!
  \begin{matrix}
   \land\{\gamma_1=\mathcal{R},\gamma_2=\mathcal{T},\gamma_3=\mathcal{R},\gamma_4=\mathcal{T},
   \gamma_5=\mathcal{T}\},\\
   \land\{\gamma_1=\mathcal{R},\gamma_2=\mathcal{R},\gamma_3=\mathcal{T},\gamma_4=\mathcal{T},
   \gamma_5=\mathcal{T}\},\\
   \land\{\gamma_1=\mathcal{R},\gamma_2=\mathcal{T},\gamma_3=\mathcal{T},\gamma_4=\mathcal{T},
   \gamma_5=\mathcal{R}\},\\
   \land\{\gamma_1=\mathcal{R},\gamma_2=\mathcal{T},\gamma_3=\mathcal{T},\gamma_4=\mathcal{R},
   \gamma_5=\mathcal{T}\},\\
   \land\{\gamma_1=\mathcal{T},\gamma_2=\mathcal{T},\gamma_3=\mathcal{T},\gamma_4=\mathcal{R},
   \gamma_5=\mathcal{R}\},\vspace{0pt}\\
   \land\{\gamma_1=\mathcal{T},\gamma_2=\mathcal{T},\gamma_3=\mathcal{R},\gamma_4=\mathcal{T},
   \gamma_5=\mathcal{R}\},\\
   \land\{\gamma_1=\mathcal{T},\gamma_2=\mathcal{T},\gamma_3=\mathcal{R},\gamma_4=\mathcal{R},
   \gamma_5=\mathcal{T}\}\;\\
  \end{matrix}\!
 \right\rbrace\!\!
\end{equation}
\end{defn}
\begin{defn}
\label{def:notq15R}
$\bar{q}_{15}^R \coloneqq$
\begin{equation}
\label{eq:notq15R}
\bigvee\!\!
 \left\lbrace\!
  \begin{matrix}
   \land\{\gamma_1=\mathcal{T},\gamma_2=\mathcal{R},\gamma_3=\mathcal{T},\gamma_4=\mathcal{T},
   \gamma_5=\mathcal{R}\},\\
   \land\{\gamma_1=\mathcal{T},\gamma_2=\mathcal{R},\gamma_3=\mathcal{T},\gamma_4=\mathcal{R},
   \gamma_5=\mathcal{T}\},\vspace{0pt}\\
   \land\{\gamma_1=\mathcal{T},\gamma_2=\mathcal{R},\gamma_3=\mathcal{R},\gamma_4=\mathcal{T},
   \gamma_5=\mathcal{T}\},\\
   \land\{\gamma_1=\mathcal{R},\gamma_2=\mathcal{T},\gamma_3=\mathcal{R},\gamma_4=\mathcal{T},
   \gamma_5=\mathcal{T}\},\\
   \land\{\gamma_1=\mathcal{R},\gamma_2=\mathcal{R},\gamma_3=\mathcal{T},\gamma_4=\mathcal{T},
   \gamma_5=\mathcal{T}\},\\
   \land\{\gamma_1=\mathcal{R},\gamma_2=\mathcal{T},\gamma_3=\mathcal{T},\gamma_4=\mathcal{T},
   \gamma_5=\mathcal{R}\},\\
   \land\{\gamma_1=\mathcal{R},\gamma_2=\mathcal{T},\gamma_3=\mathcal{T},\gamma_4=\mathcal{R},
   \gamma_5=\mathcal{T}\}\;\\
  \end{matrix}\!
 \right\rbrace\!\!
\end{equation}
\end{defn}

$q_{15}^R$ and $\bar{q}_{15}^R$ are then equally strong, and their construction provides a
solution:
\begin{solution}
\label{solution:5gods}
A solution to the $5$ gods puzzle with 2 random and 3 truthful gods 
using $4.15$ questions exists.
\end{solution}
\begin{proof}
Put $t_q(q_1^5,\gamma_1)$ to $\gamma_1$ and consider the possible cases:
\begin{description}[itemsep=2pt,topsep=3pt,leftmargin=0.6em]
\item[Case \(t(q_1^5,\gamma_1)\):] \hspace{0.0em}Then $q_{15}^R$ (eq.\,\eqref{eq:q15R}) 
holds by its construction, 
and theorem \ref{theorem:templateWorks}. 

Given 
$q_{15}^R$, 
$\gamma_2$ is most likely to be non-random,
and we'll ask her next.

We'll again aim to split the remaining possibilities in two equally large parts, 
and with gods likely to be non-random on both sides.
Let
\begin{defn}
\label{def:q25}
$q_{2}^5 \coloneqq$
\begin{equation}
\label{eq:q25}
\bigvee\!\!
 \left\lbrace\!
  \begin{matrix}
   \land\{\gamma_1=\mathcal{R},\gamma_2=\mathcal{R},\gamma_3=\mathcal{T},\gamma_4=\mathcal{T},
   \gamma_5=\mathcal{T}\},\\
   \land\{\gamma_1=\mathcal{R},\gamma_2=\mathcal{T},\gamma_3=\mathcal{T},\gamma_4=\mathcal{T},
   \gamma_5=\mathcal{R}\},\\
   \land\{\gamma_1=\mathcal{R},\gamma_2=\mathcal{T},\gamma_3=\mathcal{T},\gamma_4=\mathcal{R},
   \gamma_5=\mathcal{T}\},\\
   \land\{\gamma_1=\mathcal{T},\gamma_2=\mathcal{T},\gamma_3=\mathcal{T},\gamma_4=\mathcal{R},
   \gamma_5=\mathcal{R}\}\;\vspace{0pt}\\
  \end{matrix}\!
 \right\rbrace\!\!\!\!
\end{equation}
\end{defn}
Then $\neg q_{2}^5 \leftrightarrow$
\begin{equation}
\label{eq:notq25}
\bigvee\!\!
 \left\lbrace\!
  \begin{matrix}
   \land\{\gamma_1=\mathcal{R},\gamma_2=\mathcal{T},\gamma_3=\mathcal{R},\gamma_4=\mathcal{T},
   \gamma_5=\mathcal{T}\},\\
   \land\{\gamma_1=\mathcal{T},\gamma_2=\mathcal{T},\gamma_3=\mathcal{R},\gamma_4=\mathcal{T},
   \gamma_5=\mathcal{R}\},\\
   \land\{\gamma_1=\mathcal{T},\gamma_2=\mathcal{T},\gamma_3=\mathcal{R},\gamma_4=\mathcal{R},
   \gamma_5=\mathcal{T}\}\;\\
  \end{matrix}\!
 \right\rbrace\!\!\!\!\!
\end{equation}
There is no more $\gamma_2=\mathcal{R}$ possibility to add to $q_2^5$,
but there is one for $\neg q_2^5$:
\begin{defn}
\label{def:notq25R}
$\bar{q}_{25}^R \coloneqq$
\begin{equation}
\label{eq:notq25R}
\bigvee\!\!
 \left\lbrace\!
  \begin{matrix}
   \land\{\gamma_1=\mathcal{R},\gamma_2=\mathcal{R},\gamma_3=\mathcal{T},\gamma_4=\mathcal{T},
   \gamma_5=\mathcal{T}\},\\
   \land\{\gamma_1=\mathcal{R},\gamma_2=\mathcal{T},\gamma_3=\mathcal{R},\gamma_4=\mathcal{T},
   \gamma_5=\mathcal{T}\},\\
   \land\{\gamma_1=\mathcal{T},\gamma_2=\mathcal{T},\gamma_3=\mathcal{R},\gamma_4=\mathcal{T},
   \gamma_5=\mathcal{R}\},\\
   \land\{\gamma_1=\mathcal{T},\gamma_2=\mathcal{T},\gamma_3=\mathcal{R},\gamma_4=\mathcal{R},
   \gamma_5=\mathcal{T}\}\;\\
  \end{matrix}\!
 \right\rbrace\!\!\!\!\!\!
\end{equation}
\end{defn}
$q_2^5$ and $\neg q_2^5$ are balanced, and we'll ask $\gamma_2$ about $q_2^5$ next.
\begin{description}[itemsep=2pt,topsep=1pt,leftmargin=0.6em]
\item[Case $t(q_2^5,\gamma_2)$:]
Then $q_{2}^5$ (eq.\,\eqref{eq:q25}) holds, by construction. 
Therefore $\gamma_3\neq\mathcal{R}$ and is safe to ask. 

Asking $\gamma_3$ about $\gamma_4\neq\mathcal{R}$ and $\gamma_5\neq\mathcal{R}$
determine the rest of the gods.

This case used $4$ questions, and covered $4$ possibilities.

\item[Case $\neg t(q_2^5,\gamma_2)$:]
Then $\bar{q}_{25}^R$ (eq.\,\eqref{eq:notq25R}) holds, by construction. 

Next we'll go after $\gamma_4$, since it's likely that she isn't
$\mathcal{R}$.

We'll again aim to split the remaining possibilities in two equally large parts.
Let
\begin{defn}
\label{def:q35}
$q_{3}^5 \coloneqq$
\begin{equation}
\label{eq:q35}
\bigvee\!\!
 \left\lbrace\!
  \begin{matrix}
   \land\{\gamma_1=\mathcal{T},\gamma_2=\mathcal{T},\gamma_3=\mathcal{R},\gamma_4=\mathcal{T},
   \gamma_5=\mathcal{R}\},\\
   \land\{\gamma_1=\mathcal{T},\gamma_2=\mathcal{T},\gamma_3=\mathcal{R},\gamma_4=\mathcal{R},
   \gamma_5=\mathcal{T}\}\;\\
  \end{matrix}\!
 \right\rbrace\!\!\!
\end{equation}
\end{defn}
Then $\neg q_{3}^5 \leftrightarrow$
\begin{equation}
\label{eq:notq35}
\bigvee\!\!
 \left\lbrace\!
  \begin{matrix}
   \land\{\gamma_1=\mathcal{R},\gamma_2=\mathcal{T},\gamma_3=\mathcal{R},\gamma_4=\mathcal{T},
   \gamma_5=\mathcal{T}\},\\
   \land\{\gamma_1=\mathcal{R},\gamma_2=\mathcal{R},\gamma_3=\mathcal{T},\gamma_4=\mathcal{T},
   \gamma_5=\mathcal{T}\}\;\\
  \end{matrix}\!
 \right\rbrace\!\!\!
\end{equation}
There is no more $\gamma_4=\mathcal{R}$ possibility to add to $q_3^5$,
but there is one for $\neg q_3^5$:
\begin{defn}
\label{def:notq35R}
$\bar{q}_{35}^R \coloneqq$
\begin{equation}
\label{eq:notq35R}
\bigvee\!\!
 \left\lbrace\!
  \begin{matrix}
   \land\{\gamma_1=\mathcal{R},\gamma_2=\mathcal{T},\gamma_3=\mathcal{R},\gamma_4=\mathcal{T},
   \gamma_5=\mathcal{T}\},\\
   \land\{\gamma_1=\mathcal{R},\gamma_2=\mathcal{R},\gamma_3=\mathcal{T},\gamma_4=\mathcal{T},
   \gamma_5=\mathcal{T}\},\\
   \land\{\gamma_1=\mathcal{T},\gamma_2=\mathcal{T},\gamma_3=\mathcal{R},\gamma_4=\mathcal{R},
   \gamma_5=\mathcal{T}\}\;\\
  \end{matrix}\!
 \right\rbrace\!\!\!
\end{equation}
\end{defn}
$q_3^5$ and $\neg q_3^5$ are as balanced as possible, and we'll ask $\gamma_4$ about $q_3^5$ next.
\begin{description}[itemsep=2pt,topsep=1pt,leftmargin=0.6em]
\item[Case $t(q_3^5,\gamma_4)$:]
Then $q_{3}^5$ (eq.\,\eqref{eq:q35}) holds, by construction. 
Therefore $\gamma_2\neq\mathcal{R}$ and is safe to ask. 

Asking $\gamma_2$ about $\gamma_4\neq\mathcal{R}$
determines the rest of the gods.

This case used $4$ questions, and covered $2$ possibilities.
\item[Case $\neg t(q_3^5,\gamma_3)$:]
Then $\bar{q}_{35}^R$ (eq.\,\eqref{eq:notq35R}) holds, by construction. 
Therefore $\gamma_5\neq\mathcal{R}$ and is safe to ask. 

Asking $\gamma_5$ about $\gamma_4\neq\mathcal{R}$, and if needed 
asking $\gamma_5$ about $\gamma_3\neq\mathcal{R}$,
determine the rest of the gods.

This case used $5$ questions for $2$ possibilities,
and $4$ questions for $1$ possibility.
\end{description}
\end{description}


\item[Case \(\neg t(q_1^5,\gamma_1)\):]
Then $\bar{q}_{15}^R$ (eq.\,\eqref{eq:notq15R}) holds, by construction.

We'll go after $\gamma_3$ next. Let
\begin{defn}
\label{def:q2bar5}
$q_{2}^{\bar{5}} \coloneqq$
\begin{equation}
\label{eq:q2bar5}
\bigvee\!\!
 \left\lbrace\!
  \begin{matrix}
   \land\{\gamma_1=\mathcal{T},\gamma_2=\mathcal{R},\gamma_3=\mathcal{T},\gamma_4=\mathcal{T},
   \gamma_5=\mathcal{R}\},\\
   \land\{\gamma_1=\mathcal{T},\gamma_2=\mathcal{R},\gamma_3=\mathcal{R},\gamma_4=\mathcal{T},
   \gamma_5=\mathcal{T}\},\\
   \land\{\gamma_1=\mathcal{R},\gamma_2=\mathcal{T},\gamma_3=\mathcal{T},\gamma_4=\mathcal{T},
   \gamma_5=\mathcal{R}\}\;\\
  \end{matrix}\!
 \right\rbrace\!\!\!\!
\end{equation}
\end{defn}
Then $\neg q_{2}^{\bar{5}} \leftrightarrow$
\begin{equation}
\label{eq:notq2bar5}
\bigvee\!\!
 \left\lbrace\!
  \begin{matrix}
   \land\{\gamma_1=\mathcal{T},\gamma_2=\mathcal{R},\gamma_3=\mathcal{T},\gamma_4=\mathcal{R},
   \gamma_5=\mathcal{T}\},\vspace{0pt}\\
   \land\{\gamma_1=\mathcal{R},\gamma_2=\mathcal{T},\gamma_3=\mathcal{T},\gamma_4=\mathcal{R},
   \gamma_5=\mathcal{T}\},\\
   \land\{\gamma_1=\mathcal{R},\gamma_2=\mathcal{R},\gamma_3=\mathcal{T},\gamma_4=\mathcal{T},
   \gamma_5=\mathcal{T}\},\\
   \land\{\gamma_1=\mathcal{R},\gamma_2=\mathcal{T},\gamma_3=\mathcal{R},\gamma_4=\mathcal{T},
   \gamma_5=\mathcal{T}\}\;\\
  \end{matrix}\!
 \right\rbrace\!\!\!\!\!
\end{equation}
Adding $\gamma_3=\mathcal{R}$ possibilities to $q_2^{\bar{5}}$ and 
$\neg q_2^{\bar{5}}$ gives
\begin{defn}
\label{def:q2bar5R}
$q_{2{\bar{5}}}^R \coloneqq$
\begin{equation}
\label{eq:q2bar5R}
\bigvee\!\!
 \left\lbrace\!
  \begin{matrix}
   \land\{\gamma_1=\mathcal{R},\gamma_2=\mathcal{T},\gamma_3=\mathcal{R},\gamma_4=\mathcal{T},
   \gamma_5=\mathcal{T}\},\\
   \land\{\gamma_1=\mathcal{T},\gamma_2=\mathcal{R},\gamma_3=\mathcal{T},\gamma_4=\mathcal{T},
   \gamma_5=\mathcal{R}\},\\
   \land\{\gamma_1=\mathcal{T},\gamma_2=\mathcal{R},\gamma_3=\mathcal{R},\gamma_4=\mathcal{T},
   \gamma_5=\mathcal{T}\},\\
   \land\{\gamma_1=\mathcal{R},\gamma_2=\mathcal{T},\gamma_3=\mathcal{T},\gamma_4=\mathcal{T},
   \gamma_5=\mathcal{R}\}\;\\
  \end{matrix}\!
 \right\rbrace\!\!\!\!\!\!
\end{equation}
\end{defn}
\begin{defn}
\label{def:notq2bar5R}
$\bar{q}_{2{\bar{5}}}^R \coloneqq$
\begin{equation}
\label{eq:notq2bar5R}
\bigvee\!\!
 \left\lbrace\!
  \begin{matrix}
   \land\{\gamma_1=\mathcal{T},\gamma_2=\mathcal{R},\gamma_3=\mathcal{T},\gamma_4=\mathcal{R},
   \gamma_5=\mathcal{T}\},\vspace{0pt}\\
   \land\{\gamma_1=\mathcal{R},\gamma_2=\mathcal{T},\gamma_3=\mathcal{T},\gamma_4=\mathcal{R},
   \gamma_5=\mathcal{T}\},\\
   \land\{\gamma_1=\mathcal{R},\gamma_2=\mathcal{R},\gamma_3=\mathcal{T},\gamma_4=\mathcal{T},
   \gamma_5=\mathcal{T}\},\\
   \land\{\gamma_1=\mathcal{R},\gamma_2=\mathcal{T},\gamma_3=\mathcal{R},\gamma_4=\mathcal{T},
   \gamma_5=\mathcal{T}\},\\
   \land\{\gamma_1=\mathcal{T},\gamma_2=\mathcal{R},\gamma_3=\mathcal{R},\gamma_4=\mathcal{T},
   \gamma_5=\mathcal{T}\}\;\\
  \end{matrix}\!
 \right\rbrace\!\!\!\!\!\!
\end{equation}
\end{defn}
$q_2^{\bar{5}}$ and $\neg q_2^{\bar{5}}$ are balanced, and we'll ask $\gamma_3$ about
$q_2^{\bar{5}}$ next.
\begin{description}[itemsep=2pt,topsep=1pt,leftmargin=0.6em]
\item[Case $t(q_2^{\bar{5}},\gamma_2)$:]
Then $q_{2{\bar{5}}}^R$ (eq.\,\eqref{eq:q2bar5R}) holds, by construction. 
Therefore $\gamma_4\neq\mathcal{R}$ and is safe to ask. 

Asking $\gamma_4$ about $\gamma_2\neq\mathcal{R}$ and $\gamma_3\neq\mathcal{R}$
determine the rest of the gods.

This case used $4$ questions, and covered $4$ possibilities.

\item[Case $\neg t(q_2^{\bar{5}},\gamma_2)$:]
Then $\bar{q}_{2{\bar{5}}}^R$ (eq.\,\eqref{eq:notq2bar5R}) holds, by construction. 
Therefore $\gamma_5\neq\mathcal{R}$ and is safe to ask. 

Ask $\gamma_5$ about $\gamma_4\neq\mathcal{R}$:
\begin{description}[itemsep=2pt,topsep=1pt,leftmargin=0.6em]
\item[If $\gamma_4\neq\mathcal{R}$,] then asking 
$\gamma_5$ about $\gamma_1\neq\mathcal{R}$, and if needed asking
$\gamma_5$ about $\gamma_3\neq\mathcal{R}$,
determine all the gods.

This case used $5$ questions for $2$ possibilities,
and $4$ questions for $1$ possibility.

\item[If $\gamma_4=\mathcal{R}$,] then asking 
$\gamma_5$ about $\gamma_1\neq\mathcal{R}$
determines all gods.

This case used $4$ questions, and covered $2$ possibilities.
\end{description}
\end{description}
\end{description}
\vspace{-0.45em}
\end{proof}
\vspace{-1.85em}


\subsection{Average number of questions used}
\label{sec:avg}

The number of questions used for each possibility is shown below \eqref{nOfQ}.
Some possibilities can be detected in more than one way.
When that can happen, 
the probability for each
question sequence is listed as well.
As can be seen, it's slightly better to try to hide
cases that can take $5$ questions among possibilities that have multiple ways to get detected
(e.g., $(\frac{1}{4}4+\frac{1}{4}5+\frac{1}{2}5) + 4 < \frac{4+5}{2}+\frac{4+5}{2}$);
see section \ref{sec:algo}.
\vspace{-0.2em}
\begin{equation}
\label{nOfQ}
 \begin{matrix}
  P\,|Qs| & \gamma_1    & \gamma_2    & \gamma_3    & \gamma_4    & \gamma_5    \\
  \frac{1}{4}4,\frac{1}{4}5,\frac{1}{2}5
          & \mathcal{R} & \mathcal{R} & \mathcal{T} & \mathcal{T} & \mathcal{T} \\
  \frac{1}{2}4,\frac{1}{2}4
          & \mathcal{R} & \mathcal{T} & \mathcal{T} & \mathcal{T} & \mathcal{R} \\[1pt]
  \frac{1}{2}4,\frac{1}{2}4
          & \mathcal{R} & \mathcal{T} & \mathcal{T} & \mathcal{R} & \mathcal{T} \\
  4       & \mathcal{T} & \mathcal{T} & \mathcal{T} & \mathcal{R} & \mathcal{R} \\
  4       & \mathcal{T} & \mathcal{T} & \mathcal{R} & \mathcal{T} & \mathcal{R} \\
  \frac{1}{2}4,\frac{1}{2}4
          & \mathcal{T} & \mathcal{T} & \mathcal{R} & \mathcal{R} & \mathcal{T} \\
  \frac{1}{2}5,\frac{1}{4}4,\mathit{\frac{1}{4}5}
          & \mathcal{R} & \mathcal{T} & \mathcal{R} & \mathcal{T} & \mathcal{T} \\

  4       & \mathcal{T} & \mathcal{R} & \mathcal{T} & \mathcal{T} & \mathcal{R} \\
  \frac{1}{2}4,\frac{1}{2}4 
          & \mathcal{T} & \mathcal{R} & \mathcal{R} & \mathcal{T} & \mathcal{T} \\

  4       & \mathcal{T} & \mathcal{R} & \mathcal{T} & \mathcal{R} & \mathcal{T} \\
 \end{matrix}
\end{equation}
Thus, the average number of questions used to find a solution is
$ ( 2(\frac{1}{4}4 + \frac{1}{4}5 + \frac{1}{2}5) + 8\cdot 4 ) \;/\; 10 \;=\; 4.15 $.

\enlargethispage{-1.5\baselineskip}

\section{An algorithm for finding solutions}
\label{sec:algo}

You can derive an algorithm for solving the generalized problem from the solution in 
section \ref{sec:t3r2}. 

To minimize the expected number of questions, as mentioned in 
section \ref{sec:avg}, 
let $\land\{\gamma_1=g_1, \ldots, \gamma_n=g_n\}$ be a possible god configuration disjunct. 
Then the average number of questions used to conclude
$\land\{\gamma_1=g_1, \ldots, \gamma_n=g_n\}$ is
$1/2^{r_1} q_1 + \ldots + 1/2^{r_k} q_k$, where $r_i$ is 
the number of random gods questioned on a path to a conclusion that this disjunct holds,
$q_i$ is the number of questions used for that particular conclusion,
and $k$ is the number of ways that the disjunct can be detected.
See probability factors $1/4$ and $1/2$ (and omitted $1/2^0$), and
number of questions $4$ and $5$, in \eqref{nOfQ}.
Hence, when we know that a god is non-random, handing out shorter detection paths
to disjuncts with smaller $r_i$ will result in a lower average number of questions used.

The algorithm has been 
implemented.\footnote{You
\label{fn:prog} 
can find the free and open-source program here:
\url{https://github.com/DanielVallstrom/hardest}}
%
When run on our ``$0$-$3$-$2$'' problem from section \ref{sec:t3r2}, the program does find
a slightly better solution using $4.1375$ questions on average, and this is 
conjectured
optimal.
See appendix \ref{app:solverSolution} for a derivation of the solution from the solver.


During a search you can estimate the final question average,
and abort a (sub-) search that is estimated to be above some upper bound. You can also
catch aborted searches at nodes that do seem promising, i.e.\ at nodes estimated
to be below some upper bound or threshold, and resume search at those promising nodes.
%

Randomization is implemented. For example, disjuncts can be shuffled.
With the support of undoing and resuming sub-searches at any node,
you can also repeat any sub-search a number of times, and take the best estimated
result.


To minimize the risk of asking random gods, you can,
when preparing a question, 
deliberately swap disjuncts between positive and
negative sides. 
Let $\gamma_i$ be the god to be asked after a positive answer, and 
let $\gamma_j$ be the god to be asked after a negative answer.
Then we can choose $\gamma_i$ and $\gamma_j$ so that, after swapping, 
the number of disjuncts where $\gamma_i$ and $\gamma_j$ are random in the
positive respectively 
negative case is minimized.

You can also swap disjuncts to balance the sides so that
$\gamma_i$ and $\gamma_j$ have the same risk of being random,
even though it doesn't lower the sum of the risks of asking random gods.
When the number of disjuncts to split is odd, and the risks of $\gamma_i$ and $\gamma_j$
being random differ minimally,
you can try to put the extra disjunct and 
the extra risk
on different sides, to balance the strengths of the sides.

Initial 
tests 
indicate that trying to get a side to have a god with no chance of
being random is sometimes good, even at the expense of getting the sides a little unbalanced.

You can milk a solution by basing a new search on that solution, 
but e.g.\ altering --- increasing or decreasing --- the number of  
repeated sub-searches at a node that you take the best estimated
result of.
Some form of milking has been implemented and tests indicate that
it often finds 
improvements.


\enlargethispage{-1.5\baselineskip}

\section{Generalizations}
You can generalize the problem even more, to allow for any number of 
non-random
god types, 
as long as you can deduce something from their answers. 
Similarly, 
you can have more random god types, not just the one $\mathcal{R}$ type.
Further,
random gods function like noisy sources. 
See definition \ref{def:gen}.
\begin{defn}[The noisy source identification problem]
\label{def:gen}
Let $T_1,\ldots,T_i$ be 
non-random
types of sources. 
Let $R_1,\ldots,R_j$ be noisy source types. 
Let $s_1,\ldots,s_n$ be sources, each of some unknown type.
Noisy sources will answer questions randomly, while 
non-random sources will answer 
correctly.\footnote{As 
we saw in the puzzle, some of the non-random types could have their sources invert their answers, i.e., lie.
And we don't need to understand the sources; we just need to know that one word means yes or no. But 
that is not germane to the computational problem.}
The problem is to sort out which type each source is,
with as few yes or no questions as possible.
\end{defn}

Having more types will make for many more 
possible
configurations.
However, 
the central source of difficulty in these problems is the presence 
of 
at least one noisy source.

The fact that you can fairly efficiently determine 
the state of affairs even
when just under half of 
sources are noisy
is perhaps remarkable.

\begin{theorem}
A noisy source identification problem is solvable if and only if 
the number of random sources is strictly less than the number of non-random sources.
\end{theorem}

With 
random gods functioning as noisy sources,
results here 
echo
principles of 
fault-tolerant 
computing.\cite{preparata1967connection, pelc1998reliable, pelc2002searching}

\section{Computed upper bounds}
%
%
%
\begin{table}
\vspace{-9pt}
\begin{tabular}{c@{\hspace{5pt}}c@{\hspace{5pt}}c@{\hspace{11pt}}r@{.}l}
  $|\mathcal{F}|$  &  $|\mathcal{T}|$ & $|\mathcal{R}|$ & \multicolumn{2}{c}{\hspace{-24pt}$|Qs|$} \\[4pt]
  0 & 2 & 1 & $\mathbf{2}$&$\mathbf{0}$ \\ 
  1 & 1 & 1 &  $\mathbf{3}$&$\mathbf{0}$ \\[4pt]
  
  0 & 3 & 1 &  $\mathbf{2}$&$\mathbf{375}$ \\ 
  1 & 2 & 1 &  $\mathbf{3}$&$\mathbf{916667}$ \\[4pt]

  0 & 4 & 1 &  $\mathbf{2}$&$\mathbf{6}$ \\
  1 & 3 & 1 &  $\mathbf{4}$&$\mathbf{6}$ \\
  2 & 2 & 1 &  $\mathbf{5}$&$\mathbf{133333}$ \\[1pt]
  0 & 3 & 2 &  $\mathit{4}$&$\mathit{13750}$ \\
  1 & 2 & 2 &  $\mathit{5}$&$\mathit{683333}$ \\[4pt]

  0 & 5 & 1 &  $\mathbf{2}$&$\mathbf{833333}$ \\
  1 & 4 & 1 &  $\mathbf{5}$&$\mathbf{1}$ \\
  2 & 3 & 1 &  $\mathbf{6}$&$\mathbf{1}$ \\[1pt]
  0 & 4 & 2 &  $\mathit{4}$&$\mathit{4}$ \\
  1 & 3 & 2 &  $\mathit{6}$&$\mathit{4}$ \\
  2 & 2 & 2 &  $\mathit{7}$&$\mathit{055556}$ \\[4pt]

  0 & 6 & 1 &  $\mathbf{3}$&$\mathbf{0}$ \\
  1 & 5 & 1 &  $\mathbf{5}$&$\mathbf{619048}$ \\
  2 & 4 & 1 &  $\mathbf{6}$&$\mathbf{923810}$ \\
  3 & 3 & 1 &  $\mathbf{7}$&$\mathbf{314286}$ \\[1pt]
  0 & 5 & 2 &  $\mathit{4}$&$\mathit{904762}$ \\
  1 & 4 & 2 &  $\mathit{7}$&$\mathit{161905}$ \\
  2 & 3 & 2 &  8&161905 \\[1pt]
  
  0 & 4 & 3 &  6&232143 \\

  1 & 3 & 3 &  8&303850 \\

  2 & 2 & 3 &  8&927679 \\[4pt]

  0 & 7 & 1 & $\mathbf{3}$&$\mathbf{1875}$   \\ 
  1 & 6 & 1 & $\mathbf{5}$&$\mathbf{982143}$ \\
  2 & 5 & 1 & $\mathbf{7}$&$\mathbf{601190}$ \\
  3 & 4 & 1 & $\mathbf{8}$&$\mathbf{296429}$ \\[1pt]
  
  0 & 6 & 2 & $\mathit{5}$&$\mathit{178571}$ \\
  1 & 5 & 2 & $\mathit{7}$&$\mathit{833333}$ \\
  2 & 4 & 2 & 9&107143 \\

  3 & 3 & 2 & 9&533929 \\[1pt]
  
  0 & 5 & 3 & 6&571429 \\ 

  1 & 4 & 3 & 9&041964 \\ 

  2 & 3 & 3 & 10&075670 \\[4pt] 
 
  0 & 8 & 1 & $\mathbf{3}$&$\mathbf{333333}$ \\ 
  1 & 7 & 1 & $\mathbf{6}$&$\mathbf{333333}$ \\ 
  2 & 6 & 1 & $\mathbf{8}$&$\mathbf{095238}$ \\ 
  3 & 5 & 1 & $\mathbf{9}$&$\mathbf{095238}$ \\ 
  4 & 4 & 1 & $\mathbf{9}$&$\mathbf{485714}$ \\[1pt]
   
  0 & 7 & 2 & $\mathit{5}$&$\mathit{527778}$ \\ 
  1 & 6 & 2 & 8&$\mathit{273810}$ \\ 

  2 & 5 & 2 & 9&956349 \\ 

  3 & 4 & 2 & 10&691270 \\[1pt]

  0 & 6 & 3 & 7&$\mathit{059524}$ \\ 

  1 & 5 & 3 & 9&696429 \\ 

  2 & 4 & 3 & 11&065675 \\

  3 & 3 & 3 & 11&424702 \\

\end{tabular}\hspace{8pt}
\begin{tabular}{c@{\hspace{5pt}}c@{\hspace{5pt}}c@{\hspace{11pt}}r@{.}l}
  $|\mathcal{F}|$  &  $|\mathcal{T}|$ & $|\mathcal{R}|$ & \multicolumn{2}{c}{\hspace{-24pt}$|Qs|$} \\[4pt]
  0 & 5 & 4 & 8&456039 \\ 

  1 & 4 & 4 & 11&048462 \\ 
  
  2 & 3 & 4 & 12&119603 \\[4pt]

  0 & 9 & 1 & $\mathbf{3}$&$\mathbf{5}$ \\ 
  1 & 8 & 1 & $\mathbf{6}$&$\mathbf{677778}$ \\ 
  2 & 7 & 1 & $\mathbf{8}$&$\mathbf{677778}$ \\ 
  3 & 6 & 1 & $\mathbf{9}$&$\mathbf{880952}$ \\ 
  4 & 5 & 1 & $\mathbf{10}$&$\mathbf{474603}$ \\[1pt] 

  0 & 8 & 2 & $\mathit{5}$&$\mathit{844444}$ \\ 
  1 & 7 & 2 & 8&$\mathit{844444}$ \\

  2 & 6 & 2 & 10&648413 \\ 

  3 & 5 & 2 & 11&653175 \\ 

  4 & 4 & 2 & 11&978095 \\[1pt] 

  0 & 7 & 3 & 7&$\mathit{425000}$ \\ 

  1 & 6 & 3 & 10&294048 \\

  2 & 5 & 3 & 11&996379 \\

  3 & 4 & 3 & 12&692381 \\[1pt] 

  0 & 6 & 4 & 8&930729 \\

  1 & 5 & 4 & 11&617990 \\

  2 & 4 & 4 & 13&020278 \\

  3 & 3 & 4 & 13&394975 \\[4pt]

  0 & 10 & 1 & $\mathbf{3}$&$\mathbf{636364}$ \\ 
  1 & 9 & 1 & $\mathbf{6}$&$\mathbf{927273}$ \\ 
  2 & 8 & 1 & $\mathbf{9}$&$\mathbf{056566}$ \\ 
  3 & 7 & 1 & $\mathbf{10}$&$\mathbf{539394}$ \\ 
  4 & 6 & 1 & $\mathbf{11}$&$\mathbf{317749}$ \\ 
  5 & 5 & 1 & $\mathbf{11}$&$\mathbf{613276}$ \\[1pt]
  
  0 & 9 & 2 & $\mathit{6}$&$\mathit{054545}$ \\ 
  1 & 8 & 2 & 9&185859 \\ 

  2 & 7 & 2 & 11&204798 \\ 

  3 & 6 & 2 & 12&474459 \\ 

  4 & 5 & 2 & 13&049206 \\[1pt] 

  0 & 8 & 3 & 7&927273 \\ 

  1 & 7 & 3 & 10&968939 \\

  2 & 6 & 3 & 12&767532 \\

  3 & 5 & 3 & 13&769697 \\ 
 
  4 & 4 & 3 & 14&069199 \\[1pt]

  0 & 7 & 4 & 9&275758 \\ 

  1 & 6 & 4 & 12&232089 \\

  2 & 5 & 4 & 13&915627 \\ 

%

  3 & 4 & 4 & 14&607397 \\[1pt]
 
  0 & 6 & 5 & 10&890225 \\

  1 & 5 & 5 & 13&731258 \\

  2 & 4 & 5 & 15&152598 \\
%

  3 & 3 & 5 & 15&555879 \\[4pt]

  0 & 11 & 1 & $\mathbf{3}$&$\mathbf{750000}$ \\ 
  1 & 10 & 1 & $\mathbf{7}$&$\mathbf{143939}$ \\ 
  2 & 9 & 1 & $\mathbf{9}$&$\mathbf{531818}$ \\ 
  3 & 8 & 1 & $\mathbf{11}$&$\mathbf{048990}$ \\ 
  4 & 7 & 1 & $\mathbf{12}$&$\mathbf{048990}$ \\ 

\end{tabular}
\vspace{-0.60em}
\caption{Upper bounds on the average number
of questions needed to solve the generalization of The Hardest Logic Puzzle Ever.
Values in \textbf{bold} are proven optimal. Values in \textit{italics} are conjectured optimal.
Values with decimals in italics could very well be optimal too.}
\label{tab:classification}
\end{table}

\begin{table}[ht]
\vspace{-9pt}
\begin{tabular}{c@{\hspace{5pt}}c@{\hspace{5pt}}c@{\hspace{11pt}}r@{.}l}
  $|\mathcal{F}|$  &  $|\mathcal{T}|$ & $|\mathcal{R}|$ & \multicolumn{2}{c}{\hspace{-24pt}$|Qs|$} \\[4pt]
  5 & 6 & 1 & $\mathbf{12}$&$\mathbf{605700}$ \\[2pt]

  0 & 10 & 2 & $\mathit{6}$&$\mathit{272727}$ \\ 
  1 & 9 & 2 & 9&660606 \\

  2 & 8 & 2 & 11&839057 \\ 

  3 & 7 & 2 & 13&188826 \\ 

  4 & 6 & 2 & 14&026659 \\

  5 & 5 & 2 & 14&265753 \\[1pt]

  0 & 9 & 3 & 8&195455 \\[0pt]

  1 & 8 & 3 & 11&393939 \\[0pt]
 
  2 & 7 & 3 & 13&414457 \\[0pt]
 
  3 & 6 & 3 & 14&703220 \\[0pt]

  4 & 5 & 3 & 15&231133 \\[1pt]

  0 & 8 & 4 & 9&832702 \\[0pt]

  1 & 7 & 4 & 12&932343 \\[0pt]

  2 & 6 & 4 & 14&720797 \\[0pt]

  3 & 5 & 4 & 15&720333 \\[0pt]

  4 & 4 & 4 & 16&060301 \\[1pt]

  0 & 7 & 5 & 11&226503 \\[0pt]

  1 & 6 & 5 & 14&226982 \\[0pt]

  2 & 5 & 5 & 15&900892 \\[0pt]

  3 & 4 & 5 & 16&635073 \\[9pt]

  0 & 12 & 1 & $\mathbf{3}$&$\mathbf{846154}$ \\ 
  0 & 13 & 1 & $\mathbf{3}$&$\mathbf{928571}$ \\
  0 & 14 & 1 & $\mathbf{4}$&$\mathbf{0}$ \\
  0 & 15 & 1 & $\mathbf{4}$&$\mathbf{093750}$ \\
  0 & 16 & 1 & $\mathbf{4}$&$\mathbf{176471}$ \\
  0 & 17 & 1 & $\mathbf{4}$&$\mathbf{277778}$ \\
  0 & 18 & 1 & $\mathbf{4}$&$\mathbf{368421}$ \\

\end{tabular}\hspace{8pt}
\begin{tabular}{c@{\hspace{5pt}}c@{\hspace{5pt}}c@{\hspace{11pt}}r@{.}l}
  $|\mathcal{F}|$  &  $|\mathcal{T}|$ & $|\mathcal{R}|$ & \multicolumn{2}{c}{\hspace{-24pt}$|Qs|$} \\[4pt]
  0 & 19 & 1 & $\mathbf{4}$&$\mathbf{450000}$ \\
  0 & 20 & 1 & $\mathbf{4}$&$\mathbf{523810}$ \\  
  0 & 25 & 1 & $\mathbf{4}$&$\mathbf{807692}$ \\
  0 & 30 & 1 & $\mathbf{5}$&$\mathbf{0}$ \\
  0 & 35 & 1 & $\mathbf{5}$&$\mathbf{25}$ \\
  0 & 40 & 1 & $\mathbf{5}$&$\mathbf{463415}$ \\
  0 & 45 & 1 & $\mathbf{5}$&$\mathbf{630435}$ \\
  0 & 50 & 1 & $\mathbf{5}$&$\mathbf{764706}$ \\
  0 & 60 & 1 & $\mathbf{5}$&$\mathbf{967213}$ \\ 
  0 & 62 & 1 & $\mathbf{6}$&$\mathbf{0}$ \\ 
  0 & 70 & 1 & $\mathbf{6}$&$\mathbf{211268}$ \\
  0 & 80 & 1 & $\mathbf{6}$&$\mathbf{432099}$ \\
  0 & 90 & 1 & $\mathbf{6}$&$\mathbf{604396}$ \\  
  0 & 100 & 1 & $\mathbf{6}$&$\mathbf{742574}$ \\
  0 & 126 & 1 & $\mathbf{7}$&$\mathbf{0}$ \\  
  0 & 150 & 1 & $\mathbf{7}$&$\mathbf{311258}$ \\
  0 & 200 & 1 & $\mathbf{7}$&$\mathbf{731343}$ \\  
  0 & 250 & 1 & $\mathbf{7}$&$\mathbf{984064}$ \\
  0 & 254 & 1 & $\mathbf{8}$&$\mathbf{0}$ \\[1pt]
  
  0 & 11 & 2 & $\mathit{6}$&$\mathit{551282}$ \\
  0 & 12 & 2 & $\mathit{6}$&$\mathit{769231}$ \\
  0 & 13 & 2 & $\mathit{6}$&$\mathit{942857}$ \\
  0 & 14 & 2 & $\mathit{7}$&$\mathit{075000}$ \\
  0 & 15 & 2 & $\mathit{7}$&$\mathit{257353}$ \\
  0 & 16 & 2 & $\mathit{7}$&$\mathit{457516}$ \\
  0 & 17 & 2 & $\mathit{7}$&$\mathit{625731}$ \\
  0 & 18 & 2 & $\mathit{7}$&$\mathit{768421}$ \\
  0 & 19 & 2 & $\mathit{7}$&$\mathit{890476}$ \\
  0 & 20 & 2 & $\mathit{7}$&$\mathit{995671}$ \\

\end{tabular}
\vspace{-0.36em}
\caption{Upper bounds on the average number
of questions needed to solve the generalization of The Hardest Logic Puzzle Ever.}
\label{tab:tab2}
\end{table}

Tables \ref{tab:classification} and \ref{tab:tab2} show
upper bounds on the 
expected number 
of questions 
needed to solve the generalized problem for
various instances.
%
$|Qs|$ for $k$ false gods and $m$ true gods
equals $|Qs|$ for $m$ false gods and $k$ true gods, for symmetry reasons
(and similarly for problems with more types).
All values were computed using the solver from footnote 
\ref{fn:prog}.\footnote{For
\label{fn:bounds}
an updated list of
currently 
known bounds, see:
\url{https://github.com/DanielVallstrom/hardest}
You are welcome to upload new bounds there.}

\if 01
%
\fi

With no loss of generality, we asked the first god, $\gamma_1$, first in section \ref{sec:t3r2},
and then proceeded to ask the second and third gods, $\gamma_2$ and $\gamma_3$. The solver follows the same
pattern, but there the gods are called $g_0$, $g_1$, and $g_2$.
Appendix \ref{app:oneR} shows why the solver will find optimal solutions to problems where
it is possible to swap disjuncts so that the gods
being asked after the first god have no chance of being random. 
Problems where the second and third asked gods won't be random include 
all problems with just one random god. In particular, we have the following theorem:
\begin{theorem}
\label{theorem:oneR}
The optimal number of questions for problems with $1$ random god, $2^n-2$ true gods, 
and $0$ false gods, is $n$.
\end{theorem}
\vspace{-0.25em}

\vspace{-0pt}
\section{A top-down solution to the puzzle}

\begin{solution}[Tim Roberts's solution]
\label{solution:nonRPrio}
Here is a solution to The Hardest Logic Puzzle Ever that is similar to Tim Roberts's
solution.\cite{rob01}
\vspace{-0.0em}
\end{solution}
\begin{proof}
We'll start by asking $\gamma_1$ the question $t_q(\gamma_3=\mathcal{R},\gamma_1)$.
Consider the possible cases:

\begin{description}[itemsep=0pt,topsep=1pt,leftmargin=1.1em]
\item[Case \(t(\gamma_3=\mathcal{R},\gamma_1)\):] \hspace{0.0em}If $\gamma_1\neq\mathcal{R}$,
 then $\gamma_3=\mathcal{R}$ by theorem \ref{theorem:templateWorks}, and 
 $\gamma_2\neq\mathcal{R}$.
 If $\gamma_1=\mathcal{R}$,
 then $\gamma_2\neq\mathcal{R}$ again. Hence we now know that $\gamma_2\neq\mathcal{R}$ and 
 is safe to question.
 
 Next we'll ask $\gamma_2$ about $\gamma_1\neq\mathcal{R}$ (because there are $4$ 
 possibilities left, half of which have $\gamma_1=\mathcal{R}$).
 
 If \(t(\gamma_1\neq\mathcal{R},\gamma_2)\), then $\gamma_3=\mathcal{R}$,  
 and e.g.\ $t(\gamma_2=\mathcal{T},\gamma_2)$ determines which is which of $\gamma_1$ and 
 $\gamma_2$. 

 If \(\neg t(\gamma_1\neq\mathcal{R},\gamma_2)\), 
 then $\gamma_1=\mathcal{R}$, and 
 $t(\gamma_2=\mathcal{T},\gamma_2)$ determines which is which of $\gamma_2$ and $\gamma_3$.

\item[Case \(\neg t(\gamma_3=\mathcal{R},\gamma_1)\):] \hspace{0.0em}If 
$\gamma_1\neq\mathcal{R}$,
 then $\gamma_3\neq\mathcal{R}$ by theorem \ref{theorem:templateWorks}.
 If $\gamma_1=\mathcal{R}$,
 then $\gamma_3\neq\mathcal{R}$ again. Hence $\gamma_3\neq\mathcal{R}$ and is safe to question.
 
 Next we'll ask $\gamma_3$ about $\gamma_1\neq\mathcal{R}$.

 If \(t(\gamma_1\neq\mathcal{R},\gamma_3)\), then $\gamma_2=\mathcal{R}$,  
 and e.g.\ $t(\gamma_3=\mathcal{T},\gamma_3)$ determines which is which of $\gamma_1$ and 
 $\gamma_3$. 

 If \(\neg t(\gamma_1\neq\mathcal{R},\gamma_3)\), 
 then $\gamma_1=\mathcal{R}$, and 
 $t(\gamma_3=\mathcal{T},\gamma_3)$ determines which is which of $\gamma_2$ and $\gamma_3$.
\end{description}
\vspace{-1.35em}
\end{proof}


\section{A note on mathematical vs.\ computational thinking}
\label{sec:fixNote}
Donald Knuth has distinguished
between 
mathematical and computational thinking.\cite{knuthInt}

G\"{o}del's incompleteness theorems
provide a particular type of mathematical thinking.
Their proofs 
consist
of
straightforward computational reasoning,
except for the fixed-point theorem,
which 
requires a certain mathematical thinking.
Similarly, the foundation of computability is straightforward and computational, 
except for its fixed-point result, Kleene's recursion theorem,
which is quite mathematical.\cite{fra13,yan03}



%
The meta-question template in this article,
def.\,\ref{def:tq},
is more of the mathematical 
kind too, perhaps.
You can maybe even view the problem of finding a meta-question as
finding a fixed-point, in some way, in some quirky setting.
Cf.\ \cite{yan03}.
%
%
%
%

\section{Infinite number of gods}
\label{sec:infGods}
The results in section \ref{sec:nGods} about when a puzzle is solvable 
hold also when the number of gods is infinite.

Let $\nu$ be the number of gods. 
Regard $\nu$ as an ordinal and
let the gods be
$\Gamma\!\coloneqq\bigcup_{\alpha<\nu}\{\gamma_\alpha\}$.

\subsection{Finding a non-random god}

For finding which type each god is, we'll define a function
$\bar{\varrho}$ that 
takes a well-ordered set of gods and
returns a non-random god, if there are more non-random than 
random gods.

Let $\bar{\varrho}(\{\gamma\})\coloneqq\gamma$.
Let $\bar{\varrho}(\{\gamma,\_\})\coloneqq\gamma$, 
with $\gamma$ least,
say.

For the successor case,
we'll ask the last god, $\gamma'$.
Like in lemma \ref{lemma:findingNonR}, we'll remove two gods, at least one of them a random god, if there is any random god at all.
Go through the gods, from least to greatest, 
until a random god $\gamma_\mathcal{\scriptscriptstyle R}$ is found,
i.e.\ until 
$t(\gamma_\mathcal{\scriptscriptstyle R}=\mathcal{R},\gamma')$,
or the search has reached the last god
in which case we'll set $\gamma_\mathcal{\scriptscriptstyle R}$ to the least god.
Then remove the last god, $\gamma'$;  
replace $\gamma_\mathcal{\scriptscriptstyle R}$ with the least god;
and update the well-ordering accordingly, by removing the last god,
and handling the $\gamma_\mathcal{\scriptscriptstyle R}$ replacement.

Then we can recursively apply $\bar{\varrho}$ to the new set of gods and
the updated well-ordering;
the result of this recursive application will be the result of the successor case too.

For the limit case,
the result of $\bar{\varrho}$ is the limit of $\bar{\varrho}$ on the smaller sets.
The limit exists if the non-random gods are more than the random gods.

%
%
%
%
%

\begin{lemma}
\label{lemma:RsLTNonRsSolvableInf}
If a $\nu$ gods puzzle has strictly more non-random gods than random gods, 
then it is solvable.  
\end{lemma}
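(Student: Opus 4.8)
The plan is to parallel the proof of Lemma~\ref{lemma:RsLTNonRsSolvable}, with the function $\bar{\varrho}$ playing the role that the induction of Lemma~\ref{lemma:findingNonR} played in the finite case. Since $\nu$ is treated as an ordinal, the set of gods $\Gamma=\bigcup_{\alpha<\nu}\{\gamma_\alpha\}$ carries its canonical well-ordering by the index, so $\bar{\varrho}(\Gamma)$ is defined. First I would argue that, under the hypothesis that the non-random gods strictly outnumber the random gods, the recursion defining $\bar{\varrho}$ really does return a non-random god: the two base cases return one outright; in the successor case, asking the greatest god $\gamma'$ and then deleting $\gamma'$ while moving the located $\gamma_{\mathcal{R}}$ to the front leaves a subproblem in which the number of random gods has dropped by at least one and the number of non-random gods by at most one — whether or not $\gamma'$ was itself random — so the strict-majority invariant is preserved and the recursion descends in order type; and in the limit case the already-stated fact that the limit of the $\bar{\varrho}$-values on the proper initial segments exists (again thanks to the strict majority) supplies a non-random god.

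Given such a non-random god $\gamma_j\neq\mathcal{R}$, I would finish exactly as in Lemma~\ref{lemma:RsLTNonRsSolvable}. By Theorem~\ref{theorem:templateWorks}, $t(q,\gamma_j)\leftrightarrow q$ for every yes/no question $q$. There are only three possible types, so for each $\alpha<\nu$ it suffices to put to $\gamma_j$ the question $t_q(\gamma_\alpha=\mathcal{R},\gamma_j)$ and, if that answer is negative, the question $t_q(\gamma_\alpha=\mathcal{T},\gamma_j)$; the reliable answers pin down whether $\gamma_\alpha$ is $\mathcal{R}$, $\mathcal{T}$, or $\mathcal{F}$. Ranging $\alpha$ over all of $\nu$ uses a transfinite sequence of questions, which is allowed because the $\nu$ gods puzzle places no bound on the number of questions. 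This identifies every god, so the puzzle is solvable.

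Essentially all of the difficulty sits in the first paragraph, i.e.\ in checking that $\bar{\varrho}$ is well defined and correct in the transfinite setting. The points to nail down are: (i) the well-ordering bookkeeping in the successor clause — deleting the greatest god, swapping the located god into the least position, and re-indexing — yields a genuine well-ordering together with the counting invariant, also in the boundary subcases where the search exhausts without finding a random god; (ii) the recursion is grounded, since each successor call strictly decreases the order type and limit calls are handled separately; and (iii) the limit clause is coherent, meaning the $\bar{\varrho}$-values along an increasing chain of initial segments eventually stabilize (for instance, once the successor steps have removed all random gods, the returned god settles to the least one) precisely when the non-random gods are in the strict majority. With those in hand, the rest is the routine ``ask the trustworthy god about everyone'' argument, and the statement follows.
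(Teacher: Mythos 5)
Your proposal matches the paper's proof: the paper likewise invokes the function $\bar{\varrho}$ (constructed in the preceding subsection) to locate a non-random god and then asks that god about every god's type, exactly as in Lemma~\ref{lemma:RsLTNonRsSolvable}. Your additional care in verifying that $\bar{\varrho}$ is well defined and correct (the counting invariant at successor steps and stabilization at limits) is a welcome elaboration of details the paper leaves implicit, but it is the same argument.
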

\vspace{-1.2em}
\begin{proof}
Assume more non-random gods than random gods. 
Then we can use $\bar{\varrho}$ to find a non-random god
$\gamma$. 
After that we can go through all the gods and ask $\gamma$ about their type.
\end{proof}

\subsection{Finding an unsolvable puzzle}

Let $\mathscr{P}(\nu)$ be the set of all subsets of ordinals less than $\nu$.

\begin{theorem}
\label{theorem:nuGodsSolvability}
A $\nu$ gods puzzle is solvable if and only if the number of random gods is strictly less than
the number of non-random gods.
\end{theorem}
\begin{proof}
The $\Leftarrow$ case is covered by lemma \ref{lemma:RsLTNonRsSolvableInf}.


For the $\Rightarrow$ case,
consider the easiest to solve of such problems:
Assume 
that the random gods equal the non-random gods,
and that there are only truthful non-random gods.

$p_n$ (def.\,\ref{def:pnUns}) from section \ref{sec:nGods} becomes
\begin{defn}
\label{def:PnuUns}
$P_\nu \coloneqq $
 \begin{equation}
 \label{eq:nonSolvable2}
  \bigcup_{\underset{|\alpha|<\aleph_0}{\alpha\in\mathscr{P}(\nu),}}
   \{\,
   \bigvee
    \left\lbrace
     \begin{matrix}
      \bigwedge\limits_{\beta\in\alpha}
      \left\lbrace
       \begin{cases}
        \gamma_\beta\!=\!\mathcal{T},&\text{if } \beta \text{ is even}\\
        \gamma_\beta\!=\!\mathcal{R},&\text{if } \beta \text{ is odd}
       \end{cases}
      \right\rbrace\!\!,\\
      \bigwedge\limits_{\beta\in\alpha}
      \left\lbrace
       \begin{cases}
        \gamma_\beta\!=\!\mathcal{R},&\text{if } \beta \text{ is even}\\
        \gamma_\beta\!=\!\mathcal{T},&\text{if } \beta \text{ is odd}
       \end{cases}
      \right\rbrace
     \end{matrix}
    \right\rbrace
   \}
 \end{equation}
\end{defn}
Let 
$P_{\mathcal{\scriptscriptstyle T}\mathcal{\scriptscriptstyle R}}$ and
$P_{\mathcal{\scriptscriptstyle R}\mathcal{\scriptscriptstyle T}}$ be the
conjunctions of $P_\nu$, similar to section \ref{sec:nGods}.
\begin{defn}
\label{def:PTR}
$P_{\mathcal{\scriptscriptstyle T}\mathcal{\scriptscriptstyle R}} \coloneqq $
 \begin{equation}
 \label{eq:PTR}
  \bigcup_{\underset{|\alpha|<\aleph_0}{\alpha\in\mathscr{P}(\nu),}}
   \{
      \bigwedge\limits_{\beta\in\alpha}
      \left\lbrace
       \begin{cases}
        \gamma_\beta\!=\!\mathcal{T},&\text{if } \beta \text{ is even}\\
        \gamma_\beta\!=\!\mathcal{R},&\text{if } \beta \text{ is odd}
       \end{cases}
      \right\rbrace
   \}
 \end{equation}
\end{defn}
\begin{defn}
\label{def:PRT}
$P_{\mathcal{\scriptscriptstyle R}\mathcal{\scriptscriptstyle T}} \coloneqq $
 \begin{equation}
 \label{eq:PRT}
  \bigcup_{\underset{|\alpha|<\aleph_0}{\alpha\in\mathscr{P}(\nu),}}
   \{
      \bigwedge\limits_{\beta\in\alpha}
      \left\lbrace
       \begin{cases}
        \gamma_\beta\!=\!\mathcal{R},&\text{if } \beta \text{ is even}\\
        \gamma_\beta\!=\!\mathcal{T},&\text{if } \beta \text{ is odd}
       \end{cases}
      \right\rbrace
   \}
 \end{equation}
\end{defn}
Let 
$P_{\mathcal{\scriptscriptstyle T}\mathcal{\scriptscriptstyle R}}^\nu$ and
$P_{\mathcal{\scriptscriptstyle R}\mathcal{\scriptscriptstyle T}}^\nu$ be the
full descriptions of the puzzle instances:
\begin{defn}
\label{def:PTRnu}
$P_{\mathcal{\scriptscriptstyle T}\mathcal{\scriptscriptstyle R}}^\nu \coloneqq $
 \begin{equation}
 \label{eq:PTRnu}
  \bigcup_{\beta<\nu}
      \left\lbrace
       \begin{cases}
        \gamma_\beta\!=\!\mathcal{T},&\text{if } \beta \text{ is even}\\
        \gamma_\beta\!=\!\mathcal{R},&\text{if } \beta \text{ is odd}
       \end{cases}
      \right\rbrace
 \end{equation}
\end{defn}
\begin{defn}
\label{def:PRTnu}
$P_{\mathcal{\scriptscriptstyle R}\mathcal{\scriptscriptstyle T}}^\nu \coloneqq $
 \begin{equation}
 \label{eq:PRTnu}
  \bigcup_{\beta<\nu}
      \left\lbrace
       \begin{cases}
        \gamma_\beta\!=\!\mathcal{R},&\text{if } \beta \text{ is even}\\
        \gamma_\beta\!=\!\mathcal{T},&\text{if } \beta \text{ is odd}
       \end{cases}
      \right\rbrace
 \end{equation}
\end{defn}
Assume that the gods are as described by 
$P_{\mathcal{\scriptscriptstyle T}\mathcal{\scriptscriptstyle R}}^\nu$ or
$P_{\mathcal{\scriptscriptstyle R}\mathcal{\scriptscriptstyle T}}^\nu$, and
that hence $P_\nu$ holds.

If both $q$ and $\neg q$ are consistent with $P_\nu$,
assume that the random gods always happen to give the incorrect answer
to a question about $q$.
If only one of $q$ and $\neg q$ is consistent with $P_\nu$,
assume that the random gods always happen to give the correct answer 
to a question about $q$.

%
%
%
%
%

We should be able to conclude that $P_\nu$ (eq.\,\eqref{eq:nonSolvable2}) holds,
if we reason in 
set theory.
Because we can ask more gods than there are random gods about 
$P_\nu$.
Since everyone will answer that 
$P_\nu$ holds we can conclude that since 
we have asked at least one truthful god, $P_\nu$ must hold.


%

Next we'll show that if 
$P_\nu\!\nvdash q$ and 
$P_\nu\!\nvdash \neg q$,
then 
\(q \Leftrightarrow
  P_{\mathcal{\scriptscriptstyle T}\mathcal{\scriptscriptstyle R}}^\nu \)
or
\(q \Leftrightarrow
  P_{\mathcal{\scriptscriptstyle R}\mathcal{\scriptscriptstyle T}}^\nu \).

Assume 
$P_\nu\!\nvdash q$ and 
$P_\nu\!\nvdash \neg q$.

Then $q$ has a model $\mathcal{M}$ (i.e.\ an assignment of the gods)
where $q$, and $P_\nu$, are true. Suppose, without loss of generality, that it's 
$P_{\mathcal{\scriptscriptstyle T}\mathcal{\scriptscriptstyle R}}^\nu$
that's true in $\mathcal{M}$.
Then, since $P_{\mathcal{\scriptscriptstyle T}\mathcal{\scriptscriptstyle R}}^\nu$
completely determines a model, $q$ holds whenever 
$P_{\mathcal{\scriptscriptstyle T}\mathcal{\scriptscriptstyle R}}^\nu$ does.

Suppose not
$P_{\mathcal{\scriptscriptstyle T}\mathcal{\scriptscriptstyle R}}^\nu$.
Then 
$P_{\mathcal{\scriptscriptstyle R}\mathcal{\scriptscriptstyle T}}^\nu$.
Since 
$P_{\mathcal{\scriptscriptstyle R}\mathcal{\scriptscriptstyle T}}^\nu$
determines its models completely, if $q$ where to hold, then $q$ would
hold in all models to $P_\nu$, contradicting
$P_\nu\!\nvdash q$.
Hence $\neg q$ holds. That is, 
if not
$P_{\mathcal{\scriptscriptstyle T}\mathcal{\scriptscriptstyle R}}^\nu$,
then $\neg q$.

Assume that 
$P_{\mathcal{\scriptscriptstyle T}\mathcal{\scriptscriptstyle R}}^\nu$ or
$P_{\mathcal{\scriptscriptstyle R}\mathcal{\scriptscriptstyle T}}^\nu$ holds,
and that hence $P_\nu$ holds.
To see that once we know that
$P_{\mathcal{\scriptscriptstyle T}\mathcal{\scriptscriptstyle R}}^\nu$ or
$P_{\mathcal{\scriptscriptstyle R}\mathcal{\scriptscriptstyle T}}^\nu$ holds,
no more conclusions can be made, 
suppose that $\gamma_{\alpha}\!=\!\mathcal{T}$, with $\alpha$ even say.
Suppose $\gamma_\alpha$ is asked about $q$.
Suppose
$P_\nu\!\nvdash q$ and 
$P_\nu\!\nvdash \neg q$.
Then an answer that $q$ (or $\neg q$) holds 
(with implications that 
$P_{\mathcal{\scriptscriptstyle T}\mathcal{\scriptscriptstyle R}}^\nu$ holds)
is undone because we can only conclude that
$q \vee \gamma_{\alpha}\!=\!\mathcal{R}$ (or 
$\neg q \vee \gamma_{\alpha}\!=\!\mathcal{R}$),
which is equivalent to 
formulas in $P_\nu$.


Similarly, if instead $\gamma_\alpha=\mathcal{R}$, then we are able to conclude only
$q \vee \gamma_\alpha\!=\!\mathcal{R}$ (or $\neg q \vee \gamma_\alpha\!=\!\mathcal{R}$).
If 
both $q$ and $\neg q$ are consistent with $P_\nu$,
the false answer that $q$ (or $\neg q$) holds
undoes the disjunct $\gamma_\alpha=\mathcal{R}$. 

If $q$ is already known, 
the concluded disjunction, 
$q \vee \gamma_1\!=\!\mathcal{R}$ (or $\neg q \vee \gamma_1\!=\!\mathcal{R}$),
will already be known too, and nothing new can be concluded again.
\end{proof}

\appendix

\section{A 4.1375 solver solution to the 0-3-2 problem}
\label{app:solverSolution}

Below is a solution from the solver$^{\ref{fn:prog}}$ of the 
$0$-$3$-$2$ problem with $0$ false gods, $3$ true gods, and
$2$ random gods.

The approach and outline are similar to that of our $4.15$ solution
in section \ref{sec:t3r2}.

The gods are named $g_0, g_1, \ldots, g_{n-1}$, for $n$ gods.
Conjunctions are written, for example, \texttt{RTTRT}, meaning
$\land\{g_0=\mathcal{R}, g_1=\mathcal{T}, g_2=\mathcal{T},
g_3=\mathcal{R}, g_4=\mathcal{T}\}$.
Disjunctions are written with each disjunct on a new line.
For example,
\vspace{-0.6em}
\begin{verbatim}
RT 
TR 
\end{verbatim}
\vspace{-0.6em}
means
$\vee\{\land\{g_0=R,g_1=T\},\land\{g_0=T,g_1=R\}\}$.

\texttt{P} is the probability $P$ that a disjunct is reached
in a particular way, as in \eqref{nOfQ} in section \ref{sec:avg},
and discussed in section \ref{sec:algo}. 
\texttt{Qs} is the number of questions used to detect a disjunct
in a particular way, as in \eqref{nOfQ}.

Each local disjunction is split in half and the upper half is used
as the next question (\texttt{q}). This split is marked by \texttt{-{}-{}-{}-{}-}.

There is 
deliberate swapping of disjuncts between positive and
negative sides, to minimize the risk of asking random gods.

\if 01
\begin{verbatim}
|gods|=5 |fGods|=0 |tGods|=3 |rGods|=2 
possibilities: 10

asking g0:
g0=R possibilities: 4
g0=R <->
RTTTR
RTTRT
RTRTT
RRTTT

all possibilities <->
TTTRR
TTRTR
TTRRT
TRTTR
TRTRT
TRRTT
RTTTR
RTTRT
RTRTT
RRTTT

randoms at g1 for  q (not counting g0=R): 1
randoms at g2 for -q (not counting g0=R): 1

after swapping (not printing g0=R conjunctions):
TTRTR
TTRRT
TRRTT
-----
TRTTR
TTTRR
TRTRT

randoms at g1 for  q (not counting g0=R): 1
randoms at g2 for -q (not counting g0=R): 0

  asking g1, with 7 disjuncts left:
  g1=R possibilities: 2
  local premise <->
  TRRTT
  RRTTT
  RTRTT
  RTTRT
  TTRTR
  TTRRT
  RTTTR
  
  odd #: positive case has one more conjunction
  randoms at g3 for  q (counting g1=R): 0
  randoms at g4 for -q (counting g1=R): 0

  after swapping (not printing g1=R 
  conjunctions):
  RTRTT
  RTTTR
  TTRTR
  -----
  TTRRT
  RTTRT
  
  randoms at g3 for  q (counting g1=R): 0
  randoms at g4 for -q (counting g1=R): 0
\end{verbatim}
\vspace{-22pt}
\begin{verbatim}

    asking g3, with 5 disjuncts left:
    g3 is non-random, with log2(5)=2.321928.
    disjunct reached, P=1/2^0, Qs=4:
    TTRTR
    disjunct reached, P=1/2^1, Qs=4:
    RTRTT
    disjunct reached, P=1/2^1, Qs=4:
    RTTTR
    disjunct reached, P=1/2^2, Qs=5:
    RRTTT
    disjunct reached, P=1/2^1, Qs=5:
    TRRTT

    asking g4, with 4 disjuncts left:
    g4 is non-random, with log2(4)=2.000000.
    disjunct reached, P=1/2^0, Qs=4:
    TTRRT
    disjunct reached, P=1/2^1, Qs=4:
    TRRTT
    disjunct reached, P=1/2^1, Qs=4:
    RTTRT
    disjunct reached, P=1/2^2, Qs=4:
    RRTTT

  asking g2, with 7 disjuncts left:
  g2=R possibilities: 1
  local premise <->
  RTRTT
  TTTRR
  TRTRT
  RTTRT
  RRTTT
  RTTTR
  TRTTR
  
  randoms at g1 for  q (counting g2=R): 0
  randoms at g2 for -q (counting g2=R): 1

  after swapping (not printing g2=R 
  conjunctions):
  RTTTR
  RTTRT
  TTTRR
  -----
  RRTTT
  TRTRT
  TRTTR
  
  randoms at g1 for  q (counting g2=R): 0
  randoms at g2 for -q (counting g2=R): 1

    asking g1, with 4 disjuncts left:
    g1 is non-random, with log2(4)=2.000000.
    disjunct reached, P=1/2^0, Qs=4:
    TTTRR
    disjunct reached, P=1/2^1, Qs=4:
    RTTTR
    disjunct reached, P=1/2^1, Qs=4:
    RTTRT
    disjunct reached, P=1/2^2, Qs=4:
    RTRTT

    asking g2, with 4 disjuncts left:
    g2=R possibilities: 1
    local premise <->
    RTRTT
    RRTTT
    TRTRT
    TRTTR
    
    odd #: positive case has one more conjunction
    randoms at g3 for  q (counting g2=R): 0
    randoms at g4 for -q (counting g2=R): 0

    after swapping (not printing g2=R 
    conjunctions):
    RRTTT
    TRTTR
    -----
    TRTRT
    
    randoms at g3 for  q (counting g2=R): 0
    randoms at g4 for -q (counting g2=R): 0
\end{verbatim}
\vspace{-22pt}
\begin{verbatim}

      asking g3, with 3 disjuncts left:
      g3 is non-random, with log2(3)=1.584963.
      disjunct reached, P=1/2^0, Qs=4:
      TRTTR
      disjunct reached, P=1/2^1, Qs=5:
      RRTTT
      disjunct reached, P=1/2^3, Qs=5:
      RTRTT

      asking g4, with 2 disjuncts left:
      g4 is non-random, with log2(2)=1.000000.
      disjunct reached, P=1/2^0, Qs=4:
      TRTRT
      disjunct reached, P=1/2^3, Qs=4:
      RTRTT

Result, average number of questions asked to 
solve the problem: 4.137500
\end{verbatim}
\fi

\begin{verbatim}
|gods|=5 |fGods|=0 |tGods|=3 |rGods|=2 
possibilities: 10

asking g0:
g0=R possibilities: 4
g0=R <->
RTTTR
RTTRT
RTRTT
RRTTT

all possibilities <->
TTTRR
TTRTR
TTRRT
TRTTR
TRTRT
TRRTT
RTTTR
RTTRT
RTRTT
RRTTT

randoms at g1 for  q (not counting g0=R): 2
randoms at g2 for -q (not counting g0=R): 1

after swapping (not printing g0=R conjunctions):
TTRTR
TTRRT
TRRTT
-----
TTTRR
TRTRT
TRTTR

randoms at g1 for  q (not counting g0=R): 1
randoms at g2 for -q (not counting g0=R): 0

  asking g1, with 7 disjuncts left:
  g1=R possibilities: 2
  local premise <->
  TRRTT
  RRTTT
  RTRTT
  RTTRT
  TTRTR
  TTRRT
  RTTTR

  odd #: negative case has one more conjunction
  randoms at g4 for  q (counting g1=R): 0
  randoms at g3 for -q (counting g1=R): 1

  after swapping (not printing g1=R 
  conjunctions):
  TTRRT
  RTTRT
  -----
  TTRTR
  RTRTT
  RTTTR

  randoms at g4 for  q (counting g1=R): 0
  randoms at g3 for -q (counting g1=R): 0
\end{verbatim}
\vspace{-22pt}
\begin{verbatim}

    asking g4, with 4 disjuncts left:
    g4 is non-random, with log2(4)=2.000000.
    disjunct reached, P=1/2^0, Qs=4:
    TTRRT
    disjunct reached, P=1/2^1, Qs=4:
    TRRTT
    disjunct reached, P=1/2^1, Qs=4:
    RTTRT
    disjunct reached, P=1/2^2, Qs=4:
    RRTTT

    asking g3, with 5 disjuncts left:
    g3 is non-random, with log2(5)=2.321928.
    disjunct reached, P=1/2^0, Qs=4:
    TTRTR
    disjunct reached, P=1/2^1, Qs=4:
    TRRTT
    disjunct reached, P=1/2^1, Qs=4:
    RTRTT
    disjunct reached, P=1/2^2, Qs=5:
    RRTTT
    disjunct reached, P=1/2^1, Qs=5:
    RTTTR
\end{verbatim}
\begin{verbatim}

\end{verbatim}
\vspace{-40pt}
\begin{verbatim}
  asking g2, with 7 disjuncts left:
  g2=R possibilities: 1
  local premise <->
  RTRTT
  TRTRT
  TRTTR
  RTTTR
  RTTRT
  TTTRR
  RRTTT

  randoms at g2 for  q (counting g2=R): 1
  randoms at g1 for -q (counting g2=R): 1

  after swapping (not printing g2=R 
  conjunctions):
  TRTRT
  TRTTR
  RRTTT
  -----
  RTTRT
  TTTRR
  RTTTR

  randoms at g2 for  q (counting g2=R): 1
  randoms at g1 for -q (counting g2=R): 0
\end{verbatim}
\begin{verbatim}

\end{verbatim}
\vspace{-40pt}
\begin{verbatim}
    asking g2, with 4 disjuncts left:
    g2=R possibilities: 1
    local premise <->
    RTRTT
    TRTRT
    TRTTR
    RRTTT

    odd #: negative case has one more conjunction
    randoms at g4 for  q (counting g2=R): 0
    randoms at g3 for -q (counting g2=R): 0

    after swapping (not printing g2=R 
    conjunctions):
    TRTRT
    -----
    TRTTR
    RRTTT

    randoms at g4 for  q (counting g2=R): 0
    randoms at g3 for -q (counting g2=R): 0
\end{verbatim}
\vspace{-22pt}
\begin{verbatim}

      asking g4, with 2 disjuncts left:
      g4 is non-random, with log2(2)=1.000000.
      disjunct reached, P=1/2^0, Qs=4:
      TRTRT
      disjunct reached, P=1/2^3, Qs=4:
      RTRTT
\end{verbatim}
\vspace{-14pt}
\begin{verbatim}
      asking g3, with 3 disjuncts left:
      g3 is non-random, with log2(3)=1.584963.
      disjunct reached, P=1/2^0, Qs=4:
      TRTTR
      disjunct reached, P=1/2^3, Qs=5:
      RTRTT
      disjunct reached, P=1/2^1, Qs=5:
      RRTTT
\end{verbatim}
\vspace{-14pt}
\begin{verbatim}
    asking g1, with 4 disjuncts left:
    g1 is non-random, with log2(4)=2.000000.
    disjunct reached, P=1/2^0, Qs=4:
    TTTRR
    disjunct reached, P=1/2^1, Qs=4:
    RTTRT
    disjunct reached, P=1/2^1, Qs=4:
    RTTTR
    disjunct reached, P=1/2^2, Qs=4:
    RTRTT

Result, average number of questions asked to 
solve the problem: 4.137500
\end{verbatim}

\subsection{Average number of questions used}

The table corresponding to \eqref{nOfQ} in section \ref{sec:avg}
for calculating the expected number of questions looks like this:

\begin{equation}
\label{nOfQSolver}
 \begin{matrix}
  P\,|Qs| & \gamma_0    & \gamma_1    & \gamma_2    & \gamma_3    & \gamma_4    \\
  \frac{1}{4}4,\frac{1}{4}5,\frac{1}{2}5
          & \mathcal{R} & \mathcal{R} & \mathcal{T} & \mathcal{T} & \mathcal{T} \\
  \frac{1}{2}5,\frac{1}{2}4
          & \mathcal{R} & \mathcal{T} & \mathcal{T} & \mathcal{T} & \mathcal{R} \\[1pt]
  \frac{1}{2}4,\frac{1}{2}4
          & \mathcal{R} & \mathcal{T} & \mathcal{T} & \mathcal{R} & \mathcal{T} \\
  4       & \mathcal{T} & \mathcal{T} & \mathcal{T} & \mathcal{R} & \mathcal{R} \\
  4       & \mathcal{T} & \mathcal{T} & \mathcal{R} & \mathcal{T} & \mathcal{R} \\
  4       & \mathcal{T} & \mathcal{T} & \mathcal{R} & \mathcal{R} & \mathcal{T} \\
  \frac{1}{2}4,\frac{1}{4}4,\mathit{\frac{1}{8}4},\mathit{\frac{1}{8}5}
          & \mathcal{R} & \mathcal{T} & \mathcal{R} & \mathcal{T} & \mathcal{T} \\

  4       & \mathcal{T} & \mathcal{R} & \mathcal{T} & \mathcal{T} & \mathcal{R} \\
  \frac{1}{2}4,\frac{1}{2}4 
          & \mathcal{T} & \mathcal{R} & \mathcal{R} & \mathcal{T} & \mathcal{T} \\

  4       & \mathcal{T} & \mathcal{R} & \mathcal{T} & \mathcal{R} & \mathcal{T} \\
 \end{matrix}
\end{equation}
Thus, the average number of questions used to find a solution is
\( ( \frac{1}{4}4 + \frac{1}{4}5 + \frac{1}{2}5 + 
     \frac{1}{2}5 + \frac{1}{2}4 +
     \frac{1}{2}4 + \frac{1}{2}4 +
     5\cdot 4 +
     \frac{1}{2}4 + \frac{1}{4}4 + \frac{1}{8}4 + \frac{1}{8}5 +
     \frac{1}{2}4 + \frac{1}{2}4 ) \;/\; 10 = 4.1375 \).

Comparing the two tables, \eqref{nOfQ} and \eqref{nOfQSolver}, the solver traded
$\frac{1}{4}5$ for $\frac{1}{8}4 + \frac{1}{8}5$, 
hiding a discovery using $5$ questions deeper. 
The gain, 
$\frac{1}{4}5 = 1.25$ minus $\frac{1}{8}4 + \frac{1}{8}5 = 1.125$ is
$0.125$, which divided by $10$ disjuncts gives $0.0125$, which is the improvement,
$4.15-4.1375$, that the solver found.


\section{Solver solution for one random god}
\label{app:oneR}
Here is an example of the solutions that the solver finds on problems 
where it is possible to get the second and third asked gods, $g_1$ and $g_2$,
to have no chance of being random.

The example also illustrates a fast, probabilistic way to estimate the expected number of questions 
at the end, 
dynamically during the search process.
Recall from section \ref{sec:algo} that 
the average number of questions used to conclude a disjunct is
$1/2^{r_1} q_1 + \ldots + 1/2^{r_k} q_k$, where $r_i$ is 
the number of random gods questioned on a path to a conclusion that this disjunct holds,
$q_i$ is the number of questions used for that particular conclusion,
and $k$ is the number of ways that the disjunct can be detected.
Given an estimate $e = (q_{e_1} + \ldots + q_{e_n}) / n$ of the expected number of questions,
and a newly found path to a disjunct that questioned $r_i$ random gods, and used $q_i$
questions, we will add $q_i$ to $e$, getting $e' = (q_{e_1} + \ldots + q_{e_n} + q_i ) / (n+1)$,
with probability $1/2^{r_i}$.
\begin{verbatim}
|gods|=5 |fGods|=1 |tGods|=3 |rGods|=1 
possibilities: 20

asking g0:
g0=R possibilities: 4
g0=R <->
RFTTT
RTFTT
RTTFT
RTTTF

all possibilities <->
FTTTR
FTTRT
FTRTT
FRTTT
TFTTR
TFTRT
TFRTT
TTFTR
TTFRT
TTTFR
TTTRF
TTRFT
TTRTF
TRFTT
TRTFT
TRTTF
RFTTT
RTFTT
RTTFT
RTTTF
\end{verbatim}
\vspace{-22pt}
\begin{verbatim}

randoms at g1 for  q (not counting g0=R): 4
randoms at g2 for -q (not counting g0=R): 3
\end{verbatim}
\vspace{-22pt}
\begin{verbatim}

after swapping (not printing g0=R conjunctions):
TFRTT
FTRTT
FTTTR
TTFTR
TTFRT
TTRFT
TTRTF
TTTFR
-----
TRFTT
FRTTT
TRTFT
TRTTF
TTTRF
TFTRT
TFTTR
FTTRT
\end{verbatim}
\vspace{-22pt}
\begin{verbatim}

randoms at g1 for  q (not counting g0=R): 0
randoms at g2 for -q (not counting g0=R): 0
\end{verbatim}
\vspace{-22pt}
\begin{verbatim}

  asking g1, with 12 disjuncts left:
  g1 is non-random, with log2(12)=3.584963.
  disjunct reached, P=1/2^0, Qs=4:
  TFRTT
  disjunct reached, P=1/2^0, Qs=4:
  FTRTT
  disjunct reached, P=1/2^0, Qs=4:
  FTTTR
  disjunct reached, P=1/2^0, Qs=4:
  TTFTR
  disjunct reached, P=1/2^1, Qs=5:
  RTFTT
  disjunct reached, P=1/2^1, Qs=5:
  RFTTT
  disjunct reached, P=1/2^1, Qs=5:
  RTTTF
  disjunct reached, P=1/2^1, Qs=5:
  RTTFT
  disjunct reached, P=1/2^0, Qs=5:
  TTFRT
  disjunct reached, P=1/2^0, Qs=5:
  TTRFT
  disjunct reached, P=1/2^0, Qs=5:
  TTRTF
  disjunct reached, P=1/2^0, Qs=5:
  TTTFR
\end{verbatim}
\vspace{-14pt}
\begin{verbatim}
state is promising: estimated result: 4.600000
\end{verbatim}
\enlargethispage{0.7\baselineskip}
\vspace{-14pt}
\begin{verbatim}
  asking g2, with 12 disjuncts left:
  g2 is non-random, with log2(12)=3.584963.
  disjunct reached, P=1/2^0, Qs=4:
  TRFTT
  disjunct reached, P=1/2^0, Qs=4:
  FRTTT
  disjunct reached, P=1/2^0, Qs=4:
  TRTFT
  disjunct reached, P=1/2^0, Qs=4:
  TRTTF
  disjunct reached, P=1/2^0, Qs=5:
  TTTRF
  disjunct reached, P=1/2^0, Qs=5:
  TFTRT
  disjunct reached, P=1/2^0, Qs=5:
  TFTTR
  disjunct reached, P=1/2^0, Qs=5:
  FTTRT
  disjunct reached, P=1/2^1, Qs=5:
  RFTTT
  disjunct reached, P=1/2^1, Qs=5:
  RTTFT
  disjunct reached, P=1/2^1, Qs=5:
  RTFTT
  disjunct reached, P=1/2^1, Qs=5:
  RTTTF

Result: average number of questions asked to 
solve the problem: 4.600000
\end{verbatim}
\vspace{-0.5em}
\enlargethispage{0.7\baselineskip}


\bibliography{how_to_solve_the_hardest_logic_puzzle_ever}


\end{document}